\documentclass[12pt,twoside]{article}

\usepackage{graphicx,graphics, amsthm, amsfonts, amsbsy,amssymb, amsmath, cite, array,arydshln, hyperref, float,tikz}
\usepackage[utf8]{inputenc}
\usetikzlibrary{positioning,fit}

\let\OLDthebibliography\thebibliography
\renewcommand\thebibliography[1]{
	\OLDthebibliography{#1}
	\setlength{\parskip}{0pt}
	\setlength{\itemsep}{0pt plus 0.3ex}
}

\everymath{\displaystyle}

\usepackage[margin=1in]{geometry}

\allowdisplaybreaks

\newtheorem{theorem}{Theorem}[section]

\newtheorem{lemma}[theorem]{Lemma}

\newtheorem{proposition}[theorem]{Proposition}

\renewenvironment{proof}{{\noindent\bfseries Proof.}}{\qed}

\begin{document}
\title{Domination polynomial of co-maximal graphs of  integer modulo ring}
\author{Bilal Ahmad Rather\\
	{\small \em School of Mathematics and Statistics, Shandong University of Technology, Zibo 255049, China}\\
	{\small \em School of Mechanical Engineering, Shandong University of Technology, Zibo 255049, China}\\
	bilalahmadrr@gmail.com
				}
\date{}

\pagestyle{myheadings} \markboth{Bilal Ahmad Rather}{Domination polynomial of co-maximal graphs of  integer modulo ring}
\maketitle

\begin{abstract}
	We investigate the domination polynomial of the co-maximal graph $\Gamma(\mathbb{Z}_n)$ related to the ring of integers modulo $n$. Explicit formulas are derived for \( n = p^{n_1} \) and \( n = p^{n_1}q^{n_2} \), demonstrating that the resulting polynomials exhibit unimodality and log-concavity. For general $n$, we present structural expressions that connect $D(\Gamma(\mathbb{Z}_n),x)$ to appropriate induced subgraphs. Finally, we examine domination roots and establish bounds for their moduli using the Eneström--Kakeya theorem.
\end{abstract}
\vskip 3mm

\noindent{\footnotesize Keywords: Domination polynomial; unimodal; log-concave; co-maximal graph; integer modulo ring}

\vskip 3mm
\noindent {\footnotesize AMS subject classification: 05C25, 05C31,  11S05, 12D10.} 
\noindent {\footnotesize ACM classification: F.2.2}
\section{Introduction}
\paragraph{}
We consider finite, simple, and undirected graphs.  A graph is represented by $G$, which has a vertex set $V(G)$ and an edge set $E(G)$. The numbers $n = |V(G)| $ and $m = |E(G)| $ represent the order and size of $ G$, respectively.  $ u\sim v $ represents an edge between two vertices, $ u $ and $ v $. 
A vertex of degree $ 0 $ is an isolated vertex, whereas a vertex of degree one is a pendent vertex. 
The \textit{degree} $ d_{v_{i}}(G) $ (or simply $ d_{i} $, provided $G$ is clear) of a vertex $ v_{i} $ is the number of incident vertices on it.
The \textit{union} of two graphs $ G_{1} $ with vertex set $V_{1} $ edge set $E_{1} $ and $ G_{2} $ with vertex set $V_{2}, $ and edge set $E_{2} $ is symbolized by $ G_{1}\cup G_{2}$, which is a graph with vertex set $ V_{1}\cup V_{2} $ and edge set $ E_{1}\cup E_{2}. $ The graph with vertex set $V_{1}\cup V_{2}$ and edge set $E(G_{1})\cup E(G_{2})\cup \{u,v~|~ u\in V(G_{1}), v\in V(G_{2}) \}$ is the \textit{join} of $ G_{1} $ and $ G_{2} $ with vertex sets $ V_{1} $ and $ V_{2} $, shown by $ G_{1}\vee G_{2} $.

If every vertex in $ V\setminus S $ is adjacent to at least one vertex in $ S$, then the non-empty set $ S\subseteq V(G)$ is a \emph{dominating} set.
The \emph{domination number} of $ G, $ represented by $ \gamma(G),$ is the least cardinality among all dominating sets of $ G $. The theory of domination in graphs is highly developed; refer to \cite{haynes}.
In a graph $ G $, a set of pairwise non-adjacent vertices is called a \emph{independent} set, $ \alpha(G)$ indicates the \emph{independence number} of $ G,$ which is the cardinality of the largest independent set. 
A vertex subset that is both independent and dominating in $ G $ is called an independent dominating set of $ G $. The smallest size of all independent dominating sets of $ G$ is the independent domination number, represented by $\gamma_{i}(G)$.
The relationship between $ \gamma, \alpha $, and $ \gamma_{i} $ of $ G $ is $ \gamma(G)\leq \gamma_{i}(G) \leq \alpha(G)$ (see, \cite{haynes}). In a graph, a \emph{clique} is a subset of vertices where each pair of distinct vertices is adjacent.
The symbol $K_{n}$ represents a complete graph of order $n$, while $\overline{K}_{n}$ represents its complement.

 For a graph $G$, let $D(G,k)$ be the number of dominating sets of cardinality $k$. The mathematical description of $D(G,x)$, the domination polynomial of $G,$, is
  \[ D(G,x)=\sum_{k=\gamma(G)}^{n}D(G,k)x^{k}. \]
  In a graph $G$, the number of dominating sets of different sizes is represented by the domination polynomial. It is a key idea in algebraic graph theory, especially when researching graph's combinatorial characteristics. Domination polynomials can be used to count and classify dominating subsets inside graph structures, determine the robustness of networks by looking at the number of dominating configurations of different sizes, and comprehend the features of graphs by looking at their zeros and factorization.
  Such a polynomial has degree $n$ since the largest dominating set is itself $V(G)$.
  A zero of polynomial $D(G,x)$ is referred as $G$'s domination zero. Calculating the domination polynomial of $G$ is generally challenging, since the domination number $\Gamma(G)$ of $G$ is the lowest power non-zero term in $D(G,x)$, and it is known that calculating $\gamma(G)\leq k$ is NP-complete \cite{garay}. Dominating sets are essential for applications including broadcasting, control, and monitoring. For network monitoring, facility locating, and resource allocation issues, dominating sets in network science are vertices covering the whole network. The dominance polynomial's coefficient's unimodality and log-concavity are of great significance since they imply a smooth distribution of dominating sets of various sizes.
  Complete multipartite graphs are determined by their domination polynomial, which makes them highly intriguing \cite{anthony}.  Alikhani,  Brown and Jahari, determined the domination polynomials of friendship graphs along with their zeros \cite{alikhani}, Akbari, Alikhani and Peng, characterized graphs with domination polynomial \cite{akbari}. Domination (independent)  polynomials of zero divisor graphs of finite commutative rings can be seen in \cite{alikhanizero, gursoy}.
  Refer to \cite{alikhani1, alikhani2, mertens} for some recent advances regarding the domination polynomials of graphs. Other polynomial of graphs are: the independent polynomial  \cite{mandrescu, alavi,  wang, brown}.
   the independent domination polynomials \cite{anwar, dod, goddard, jahari}, chromatic polynomial \cite{read,birkhoff, tutte} and several other classes of polynomials \cite{gutman}.  Some other aspects of algebraic graphs can be seen in \cite{Grau}.
   A detailed theory for the domination and independent domination polynomials of algebraic structures can be seen in \cite{bilaldml,bilalsc,bilaltcs,bilaljac,bilaljcmcc}.   
   Certain graph features are captured by the domination polynomial through its relationships with the independence and chromatic polynomials.
   Intrigued by these fascinating characteristics of domination polynomials, we investigate domination polynomials for a class of graphs generating from rings, particularly commutative rings. We identify their zeros in the complex plane, explore their log-concave and unimodal features, and provide their exact equations and some other findings.

In Section \ref{section 2}, we discuss the co-maximal graphs of the ring $\mathbb{Z}_{n}$. Additionally, for specific values of $n,$ we find the domination polynomial of such a graph and give some partial factors of the dominance polynomial. 
Section \ref{section 3} discusses the zeros of such polynomials and demonstrates their log-concave and unimodal characteristics.


\section{Domination polynomial of co-maximal graph of $Z_{n}$}\label{section 2}
\paragraph{}
An important idea in algebra and graph theory that sheds light on the composition and characteristics of both mathematical objects is the relationship of graphs to rings, frequently referred to as the study of graph representations of rings or ring graphs. 
Graphs provide an illustration of complex algebraic structures, enabling researchers to look into rings via their understanding of graph theory. Certain ring connections, such as divisibility, zero divisors, and ideals, can be represented as graph edges, making arrangements, symmetries, and depicts more transparency. Some well known families of graphs are: zero divisor graphs (co-zero divisor graphs), unit and ideal graphs, Cayley graph  and other graphs of a ring.
Shamra and Bhatwadekar \cite{sharma} introduced the concept of a co-maximal graph $ \Gamma(R) $ of a commutative rings $ R $ with vertex set $ R $ and identity $ 1\neq 0$.
The two different elements $a $ and $b $ of $R$  are adjacent in  $\Gamma(R) $ if and only if $aR+bR=R,$ where $aR $ is the ideal generated by $a\in R.$
 An integral modulo ring of order $n$ is denoted by $\mathbb{Z}_{n}.$
 A commutative ring $R$ is a finite ring if and only if $\Gamma(G)$ is finitely colourable, in which case the chromatic number equals the sum of the number of maximal ideals and the number of units of $R$ (see, \cite{sharma}).
 Esmaili and Samei \cite{esmaili} showed that $\Gamma(R)$ has exactly a cut vertex if and only if $R\cong \mathbb{F}\times \mathbb{Z}_{2}\times \mathbb{Z}_{2}\times \cdots \times \mathbb{Z}_{2},$, where $\mathbb{F}$ is a field, and $n\geq 3.$ In the same article they showed that $\Gamma(R)$ has $n$ cut vertices if and only if $R$ is a Boolean ring.
 Sinha, Kumari and Davvaz \cite{sinha} characterized rings such that $\Gamma(R)$ are split graphs, obtain nullity of co-maximal graph for local rings and non-local rings along with domination number of co-maximal graph. Other properties can be seen in \cite{ samei,maimani}
  and the references therein. Homological invariants of edge rings of co-maximal graph are given in \cite{bilalca}.
 Spectral analysis of co-maximal graphs can be seen in \cite{afkhami, bilalijpam, bilaljaa}. 

Let $ \tau(n) $ (see \cite{thomas}) denote the number of positive factors of $ n $ and let $n=p_{1}^{n_{1}}p_{2}^{n_{2}}\dots p_{r}^{n_{r}} $ be its \emph{canonical decomposition}, where $p_{i}$'s are primes, $r$ is positive, and $n_{i}$'s are non-negative integers. Then
\begin{equation}\label{divisors of n}
	\tau(n)= (n_{1}+1) (n_{2}+1)\dots(n_{r}+1)=\prod_{i=1}^{r}(n_{i}+1).
\end{equation}

Euler's totient function $ \phi(n) $ represents the number of positive integers fewer or equal to $ n $ which are relatively prime to $ n $. We recall few known facts about function $\phi(n)$:
$ \sum\limits_{d|n}\phi(d)=n, $ where $ d|n $ represents $ d $ divides $ n, $  for  relatively primes $p$ and $q$ ($p,q=1$), $\phi(pq)=\phi(p)\phi(q)$ and for prime $p$ and positive integer $\ell,$ $\sum_{i=1}^{\ell}\phi(p^{i})=p^{\ell}-1,$  where $ (p,q) $ is the greatest common factor of $ p $ and $ q $.

A proper divisor of $ n $ is an integer $ d $ that divides $ n $ and neither equals $1$ nor $n.$ 
Let $ d_{i}$'s be distinct proper divisors of $ n$ and let $G_{n}$ be a simple graph with vertex set $ V(G_{n})= \{d_{1},\dots,d_{t}\} $ in which $ d_{i} \neq d_{j} $ are adjacent if and only if $ (d_{i},d_{j})=1 $ (relatively prime), for $ 1\leq i<j\leq t $.
If $ n=p_{1}^{n_{1}}p_{2}^{n_{2}}\dots p_{r}^{n_{r}} $, where $ r $ is positive integer, $n_{1}\leq n_{2}\leq \dots\leq n_{r} $ are non-integers and $ p_{1}< p_{2}< \dots< p_{r} $ are prime numbers, then $ |V(G_{n})|=\prod\limits_{i=1}^{r}(n_{i}+1)-2,$ since $1$ and $n$ are not in $V(G_{n}).$  When $R\cong \mathbb{Z}_{n},$ the graph $ G_{n} $ plays a crucial role in determining the structure of $ \Gamma(R) $.

For $ 1\leq i \leq t $, let
\[ A_{d_{i}}= \{ x\in \mathbb{Z}_{n} : (x,n)=d_{i} \}. \] 
The sets $ A_{d_{1}}, A_{d_{2}}, \dots, A_{d_{t}} $ are pairwise disjoint and partition the vertex set of $ \Gamma(\mathbb{Z}_{n}) $ in the following manner: $ A_{d_{i}} \cap A_{d_{j}}=\phi$, when $ i\neq j $.
\[
V(\Gamma(\mathbb{Z}_{n}))=A_{d_{1}}\cup A_{d_{2}}\cup \dots \cup A_{d_{t}}\cup \{0\} \cup U(\mathbb{Z}_{n})
\]
where $ U(\mathbb{Z}_{n})=\{x\in \mathbb{Z}_{n}: (x,y)=1 \}, $ and there exist $ \phi(n) $ such elements in $ \mathbb{Z}_{n}. $ The elements of $ A_{d_{i}} $ are of the form $ xd_{i} $, where $ \left(x, \frac{n}{d_{i}}\right)=1 $.

Young \cite{my}, determined the cardinality of $ A_{d_{i}}$ as  $\phi\left( \frac{n}{d_{i}}\right)$, for $ 1\leq i \leq t .$

The subsequent lemma \cite{afkhami,banerjeeMatrices} states that the induced subgraphs $ \Gamma(A_{d_i}) $ of $ \Gamma(\mathbb{Z}_{n}) $ are complements of certain cliques.
\begin{lemma}\label{adjacency relations of proper divisor graph} Let $ n $ be  a positive integer and $ d_{i} $ be its proper divisor. Then the following hold.
	\begin{itemize}
		\item[\bf(i)]  $x_{i}\in A_{d_{i}} $ is adjacent to $ x_{j}\in A_{d_{j}} $ if and only if $ (d_{i},d_{j})=1. $
		\item[\bf (ii)] if $ v_{i}\in A_{d_{i}} $ is adjacent to $ v_{j}\in A_{d_{j}} $ for some $ i\neq j $, then $ v_{i} $ is adjacent to every $ v_{j}\in A_{d_{j}} $.
		\item [\bf (iii)] No two members of the set $ A_{d_{i}} $ are adjacent, for each $ d_{i}. $			
	\end{itemize}
\end{lemma}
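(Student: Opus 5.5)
The plan is to reduce everything to one ideal-theoretic fact. In $\mathbb{Z}_n$, the ideal generated by $x$ equals the ideal generated by $\gcd(x,n)$. Hence $x\mathbb{Z}_n=d_i\mathbb{Z}_n$ for every $x\in A_{d_i}$.

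The first step is to verify this carefully. Write $x=d_i u$ with $\left(u,\frac{n}{d_i}\right)=1$, as recorded just before the lemma. Then $u$ is invertible modulo $\frac{n}{d_i}$: choose $v$ with $uv\equiv 1 \pmod{n/d_i}$. Then $xv=d_iuv\equiv d_i \pmod n$, so $d_i\in x\mathbb{Z}_n$; the reverse inclusion $x\mathbb{Z}_n\subseteq d_i\mathbb{Z}_n$ is obvious.

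The second step uses a gcd characterization. For $a,b\in\mathbb{Z}_n$, the ideal $a\mathbb{Z}_n+b\mathbb{Z}_n$ is the image of the ideal $(a,b,n)\mathbb{Z}$. Therefore $a\mathbb{Z}_n+b\mathbb{Z}_n=\mathbb{Z}_n$ if and only if $\gcd(a,b,n)=1$. Since $\gcd(a,b,n)=\gcd(\gcd(a,n),\gcd(b,n))$, taking $a=x_i\in A_{d_i}$ and $b=x_j\in A_{d_j}$ gives the criterion
\[
x_i\sim x_j \iff (d_i,d_j)=1 .
\]
This is part (i).

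Part (ii) then follows immediately. The adjacency criterion depends only on the classes $d_i,d_j$ and not on the chosen representatives. So if one $v_i\in A_{d_i}$ is adjacent to one $v_j\in A_{d_j}$, then $(d_i,d_j)=1$, and $v_i$ is adjacent to every element of $A_{d_j}$.

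For part (iii), take two distinct $x,y\in A_{d_i}$. Then $\gcd(x,y,n)=d_i$, and $d_i>1$ because $d_i$ is a proper divisor. Hence $x\mathbb{Z}_n+y\mathbb{Z}_n=d_i\mathbb{Z}_n\neq\mathbb{Z}_n$, so $x$ and $y$ are not adjacent. Thus each $\Gamma(A_{d_i})$ is $\overline{K}_{\phi(n/d_i)}$.

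The only mildly delicate point is the invertibility of $u$ modulo $n/d_i$ in the first step; everything after it is bookkeeping with gcds.
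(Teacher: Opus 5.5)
Your proof is correct and complete. The paper gives no proof of this lemma: it quotes it from the literature (Afkhami et al.\ and Banerjee), so there is no in-paper argument to compare against. Your reduction to the criterion $x\mathbb{Z}_n+y\mathbb{Z}_n=\mathbb{Z}_n\iff\gcd(x,y,n)=1$ is the standard justification.

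One streamlining remark: your first step, showing $x\mathbb{Z}_n=d_i\mathbb{Z}_n$ by inverting $u$ modulo $n/d_i$, is never actually needed. The equality $\gcd(x_i,n)=d_i$ holds by the very definition of $A_{d_i}$. So the identity $\gcd(a,b,n)=\gcd(\gcd(a,n),\gcd(b,n))$ already yields (i). Part (iii) follows directly from $\gcd(x,y,n)=d_i>1$, without mentioning ideals.

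Conversely, Step 1 alone would also suffice. Since $d_i,d_j\mid n$, the ideal $d_i\mathbb{Z}_n+d_j\mathbb{Z}_n$ is the image of $\gcd(d_i,d_j)\mathbb{Z}$. So you can keep either route, and the ``delicate point'' you flag can be dropped entirely.
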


The following lemma states that $ \Gamma(\mathbb{Z}_{n}) $ is the join of certain complete graphs and null graphs.
\begin{lemma}[\cite{afkhami, banerjeeMatrices}] \label{joined union of co-maximal graph} For the positive integer $ n $ and its proper divisor $ d_{i} $, the following holds.
	\begin{itemize}
		\item[\bf (i)] For each $ i=1,2,\dots,t $, $ \Gamma(A_{d_{i}}) $ is isomorphic to $ \overline{K}_{\phi\left (\frac{n}{d_{i}} \right )} $, where $ \Gamma(A_{d_{i}}) $ is induced subgraph of $ A_{d_{i}}. $
		\item[\bf (ii)] The co-maximal graph of $ \mathbb{Z}_{n} $ is
		\[ \Gamma(\mathbb{Z}_{n})=K_{\phi(n)}\vee \left(K_{1}\cup G_{2} \right), \]
		where $ G_{2} $ is a  join (generalized) $ G_{n}\Big [\overline{K}_{\phi\left(\frac{n}{d_{1}}\right)},\overline{K}_{\phi\left(\frac{n}{d_{2}}\right)},\dots,\overline{K}_{\phi\left(\frac{n}{d_{t}}\right)}\Big]. $
	\end{itemize}
\end{lemma}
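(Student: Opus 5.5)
The plan is to derive Lemma~\ref{joined union of co-maximal graph} directly from Lemma~\ref{adjacency relations of proper divisor graph}, together with two elementary facts about the vertices $0$ and the units.

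\textbf{Part (i).} The cardinality $|A_{d_i}|=\phi(n/d_i)$ is the count quoted from Young. Lemma~\ref{adjacency relations of proper divisor graph}(iii) says $A_{d_i}$ is an independent set. Hence the induced subgraph $\Gamma(A_{d_i})$ is edgeless on $\phi(n/d_i)$ vertices, that is, $\overline{K}_{\phi(n/d_i)}$.

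For part (ii), I first record two adjacency facts that follow from the definition $a\mathbb{Z}_n+b\mathbb{Z}_n=\mathbb{Z}_n$.

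\textbf{The units form a clique joined to everything.} If $u\in U(\mathbb{Z}_n)$, then $u\mathbb{Z}_n=\mathbb{Z}_n$. So $u$ is adjacent to every other vertex, including $0$, the elements of every $A_{d_i}$, and the other units. Thus $U(\mathbb{Z}_n)$ induces $K_{\phi(n)}$ and is joined to the rest of the graph.

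\textbf{The vertex $0$ sees only units.} For any $x$, we have $0\cdot\mathbb{Z}_n+x\mathbb{Z}_n=x\mathbb{Z}_n$, which equals $\mathbb{Z}_n$ exactly when $x$ is a unit. So $0$ is adjacent to precisely the units. Since no other vertex of $V\setminus U(\mathbb{Z}_n)$ is adjacent to $0$, it contributes the $K_1$ component in the union.

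\textbf{The remaining vertices.} What remains is the subgraph induced on $A_{d_1}\cup\dots\cup A_{d_t}$. By Lemma~\ref{adjacency relations of proper divisor graph}(i),(ii), for $i\neq j$ either every vertex of $A_{d_i}$ is adjacent to every vertex of $A_{d_j}$ (when $(d_i,d_j)=1$) or no vertex is (otherwise). Within each class there are no edges, by (iii). This is exactly the definition of the generalized join $G_n[\overline{K}_{\phi(n/d_1)},\dots,\overline{K}_{\phi(n/d_t)}]$: replace vertex $d_i$ of $G_n$ by the null graph on $A_{d_i}$, and join two blocks completely iff $d_i\sim d_j$ in $G_n$.

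\textbf{Assembling.} Putting the pieces together, $\Gamma(\mathbb{Z}_n)=K_{\phi(n)}\vee(K_1\cup G_2)$.

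The only step needing care is the bookkeeping that the partition $V=\bigcup A_{d_i}\cup\{0\}\cup U(\mathbb{Z}_n)$ accounts for all adjacencies. In particular, for $x\in A_{d_i}$ and $y\in A_{d_j}$, $x\mathbb{Z}_n+y\mathbb{Z}_n=\gcd(x,y,n)\mathbb{Z}_n=\gcd(d_i,d_j)\mathbb{Z}_n$. This identity is the substance behind Lemma~\ref{adjacency relations of proper divisor graph}(i) and is what I would verify if that lemma is not taken for granted.
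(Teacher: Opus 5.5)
Your argument is correct and matches the paper's reasoning. The paper does not prove this lemma; it cites it from Afkhami et al.\ and Banerjee, and its only justification (in the proof of Theorem~\ref{dom of G(zn)}) is the same observation you use: units generate $\mathbb{Z}_n$, so they form a clique joined to everything, and this is combined with Lemma~\ref{adjacency relations of proper divisor graph} and Young's count $|A_{d_i}|=\phi(n/d_i)$. Your explicit checks that $0$ is adjacent only to units and that $x\mathbb{Z}_n+y\mathbb{Z}_n=\gcd(d_i,d_j)\mathbb{Z}_n$ make the argument self-contained where the paper relies on the citation.
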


%

From here on, we write $\{0\}$ instead of $K_{1}$ in $K_{1}\cup G_{2}$, which will read as $\{0\}\cup G_{2}.$
We present some results on the domination polynomial of co-maximal graphs in commutative rings. We recall the following facts regarding domination polynomials from Alikhani, Brown, and Jahari \cite{alikhani}.
 \begin{align}\label{eq union}
 	D(G_{1}\cup G_{2},x)&=D(G_{1},x)\cdot D(G_{2},x),\\\label{eq join}
 D(G_{1}\vee G_{2},x)&=((1+x)^{n_{1}}-1)\cdot ((1+x)^{n_{2}}-1)+D(G_{1},x)+ D(G_{2},x). 
 \end{align}
We additionally recall that $D(K_{n},x)=(1+x)^{n}-1$ and $D(\overline{K}_{n},x)=x^{n}.$ Using this information, we can calculate $D(\Gamma(\mathbb{Z}_{n}),x)$ for different $n$ values. The first result gives the partial results regarding the domination polynomial of $\Gamma(\mathbb{Z}_n) $ for general value of $n.$

\begin{theorem}\label{dom of G(zn)}
	Let $ G\cong \Gamma(\mathbb{Z}_n) $ be the co-maximal graph of order $ n. $ Then the following holds.
	\begin{itemize}
		\item [\bf (i)] If $ n=\prod_{i=1}^{t}p_{i} $ is product of $k\geq 2$ distinct primes, then the domination polynomial of $ G $ is
		\[ D(G,x)=(1+x)^{n}-(1+x)^{n-\phi(n)}+xD(G_{2},x), \]
		where $ D(G_{2},x) $ is the domination polynomial $ G_{2} $.
		\item[\bf (ii)] If  the canonical representation of $n$ is $ p_{1}^{n_{1}}p_{2}^{n_{2}}\dots p_{k}^{n_{k}}, $ then the domination polynomial of $ G $ is
		\[ D(G,x)=(1+x)^{\phi(n)}-(1+x)^{n-\phi(n)}+x^{\overline{n}}D(G_{2},x), \]
		where $\overline{n}=\frac{n}{\prod_{i=1}^{k}p_{i}}$ and $ D(G_{2},x) $ is the domination polynomial of the connected component $ G_{2}$ in $G.$
	\end{itemize}
\end{theorem}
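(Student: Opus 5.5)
The plan is to read everything off the join/union decomposition of Lemma~\ref{joined union of co-maximal graph}(ii), namely $\Gamma(\mathbb{Z}_n)=K_{\phi(n)}\vee(\{0\}\cup G_{2})$, and then apply the formulas \eqref{eq union} and \eqref{eq join}. Put $G_{1}=K_{\phi(n)}$, of order $n_{1}=\phi(n)$, and $H=\{0\}\cup G_{2}$, of order $n_{2}=n-\phi(n)$. The $\{0\}$ has order $1$ and the $A_{d_i}$ partition the remaining non-units. By \eqref{eq union}, together with $D(K_{1},x)=x$, we get $D(H,x)=x\,D(G_{2},x)$. Using $D(K_{\phi(n)},x)=(1+x)^{\phi(n)}-1$, formula \eqref{eq join} then gives
\[
D(G,x)=\bigl((1+x)^{\phi(n)}-1\bigr)\bigl((1+x)^{n-\phi(n)}-1\bigr)+(1+x)^{\phi(n)}-1+x\,D(G_{2},x).
\]
Expanding, the terms $\pm(1+x)^{\phi(n)}$ cancel, as do the constants $\pm1$. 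What remains is $(1+x)^{n}-(1+x)^{n-\phi(n)}+x\,D(G_{2},x)$. This is exactly the formula in part (i) when $n$ is squarefree.

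For the squarefree case I would also say explicitly what $G_{2}$ is. By Lemma~\ref{joined union of co-maximal graph}(i), each $A_{d_i}$ induces $\overline{K}_{\phi(n/d_i)}$, and two such blocks are completely joined exactly when $(d_i,d_j)=1$. So $G_{2}=G_{n}[\overline{K}_{\phi(n/d_1)},\dots,\overline{K}_{\phi(n/d_t)}]$, and $D(G_{2},x)$ is left as the undetermined factor, as the statement intends.

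For part (ii) the same computation goes through verbatim. Nothing in the decomposition of Lemma~\ref{joined union of co-maximal graph} used squarefreeness; for prime powers $p^{n_1}$ the graph $G_{2}$ is simply edgeless. The main obstacle is therefore reconciling this derivation with the displayed formula in (ii). That formula has leading term $(1+x)^{\phi(n)}$ rather than $(1+x)^{n}$, and the factor $x^{\overline{n}}$ with $\overline{n}=n/\prod p_i$ in place of $x$.

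My first attempt would be to check whether $\overline{n}$ is meant to count a block of pairwise non-adjacent vertices that every dominating set of $H$ must contain entirely. Candidates are the nilpotents, i.e. multiples of $\prod p_i$, of which there are exactly $\overline{n}$. These would replace the lone vertex $0$, provided one groups the nilpotent classes $A_{d}$ with $\operatorname{rad}(n)\mid d$ together with $0$ into an isolated-type block. Indeed, such $A_d$ are adjacent to no other non-unit, since $(d,d_j)>1$ for every proper divisor $d_j$. Hence in $H$ these $\overline{n}$ vertices, including $0$, are all isolated, and $D(H,x)=x^{\overline{n}}D(G_{2}',x)$, where $G_{2}'$ is the remaining component.

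This is the step I would verify carefully, since it yields the $x^{\overline{n}}$ factor. The first term should still come out as $(1+x)^{n}$ from the expansion above. I expect the $(1+x)^{\phi(n)}$ in (ii) to be a typo, and I would present the corrected form $(1+x)^{n}-(1+x)^{n-\phi(n)}+x^{\overline{n}}D(G_{2},x)$, checked against $n=4$.
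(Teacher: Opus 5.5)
Your proposal is correct and follows essentially the same route as the paper: write $\Gamma(\mathbb{Z}_n)=K_{\phi(n)}\vee(\{0\}\cup G_2)$, apply the union and join formulas, and for (ii) treat the $\overline{n}$ multiples of $p_1\cdots p_k$ (including $0$) as isolated vertices of $\{0\}\cup G_2$, which produces the factor $x^{\overline{n}}$. The paper's own computation for (ii) also ends with $(1+x)^{n}-(1+x)^{n-\phi(n)}+x^{\overline{n}}D(G_2,x)$, so it agrees with your diagnosis (and your $n=4$ check) that the $(1+x)^{\phi(n)}$ in the statement is a typo.
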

\begin{proof}
For a finite commutative ring $ \mathbb{Z}_{n} $ with its co-maximal graph $G\cong \Gamma(\mathbb{Z}_{n}) $,  the  relatively prime $\phi(n)$ elements with $n$  in graph $ \mathbb{Z}_{n} $ form a clique of size $\phi(n)$ as such elements are adjacent to every other vertex of $ \Gamma(\mathbb{Z}_{n}) $ since they generate all elements in $\mathbb{Z}_{n}$. Thus, with application of Lemma \ref{joined union of co-maximal graph}, the graphs $ \Gamma(\mathbb{Z}_{n}) $ can be put as
\begin{align*}
G\cong	\Gamma(\mathbb{Z}_{n})=K_{\phi(n)}\vee \left(K_{1}\cup G_{2} \right).
\end{align*}
 Now with $ n=\prod_{i=1}^{k}p_{i} $, the graph $ G_{n} $ is connected as $p_{i}$ is adjacent to all vertices in $G_{n}$ except multiplies of $p_{i}$ and another vertex $p_{j}$ is adjacent to all vertices except multiplies of $p_{j}$, and in return $p_{i}$ is adjacent to $p_{j}$ for $i\neq j$, thereby $p_{i}p_{j}$ is adjacent to some $p_{k}$ with $k\notin \{i,j\}.$ Thus, $ G_{n} $ is connected graph and so is $G_{2}$. it is obvious that each vertex in $ K_{\phi(n)} $ is adjacent to all other vertices in $ G$ and it follows that each vertex singleton $\{v\}\in V(K_{\phi(n)})$ is a dominating vertex. Likewise, with $\{0\}\cup \{v\} $ is another dominating set in $G.$ Thus, with these observation and application of \eqref{eq union} and \eqref{eq join}, we have the following generating formulae for the domination polynomial of $G$
\begin{align*}
	D(G,x)&= \big((1+x)^{\phi(n)}-1\big)\big((1+x)^{n-\phi(n)}-1\big)+(1+x)^{\phi(n)}-1+xD(G_{2},x),\\
	&=(1+x)^{n}-(1+x)^{n-\phi(n)}+xD(G_{2},x),
\end{align*}
where $ D(G_{2},x) $ is the domination polynomial of $ G_{2}. $\\
For $ n=\prod_{i=1}^{k}p_{i}^{n_{i}} $ with primes $ p_{i} $ and $ n_{i} $'s as positive integers with $ n_{i}\geq 2, $ for at least one $i.$
In $ G_{2} $, the collection of vertices/elements generated by the ideal $ \langle p_{1}p_{2}\dots p_{k } \rangle\setminus\{0\} $ are never relatively prime with other vertices of $ G_{2} $. It follows that all such vertices  form an isolated set of order $ \frac{n}{\prod_{i=1}^{k}p_{i}}-1 $. Together with $\{0\}$, total number of isolated vertices in  $G_{2}$ are $ \prod_{i=1}^{k}p_{i}^{n_{i}-1} $ isolated vertices in $ K_{1}\cup G_{2} $. Also, such vertices are always part  of domination set other than the vertices of $ K_{\phi(n)}. $ Thus, by \eqref{eq union} and \eqref{eq join}, we have get the following parallel formulae for the domination polynomial of $ G $
\begin{align*}
	D(G,x)&= \big((1+x)^{\phi(n)}-1\big)\big((1+x)^{n-\phi(n)}-1\big)+(1+x)^{\phi(n)}-1+x^{\overline{n}}D(G_{2},x),\\
	&=(1+x)^{n}-(1+x)^{n-\phi(n)}+x^{\overline{n}}D(G_{2},x),
\end{align*}
where $\overline{n}=\prod_{i=1}^{k}p_{i}^{n_{i}-1}$ and $ D(G_{2},x) $ is the domination polynomial of subgraph $ G_{2} $ in $G.$
\end{proof}

From \ref{dom of G(zn)}, we observe that the domination polynomial of $\Gamma(\mathbb{Z}_{n})$ depends on the subgraph $G_{2}$ of $G.$ But, from the construction of $G_{n}$ as $n$ increase, so does its divisors and thereby structure of $G_{2}$ becomes complex and hence it becomes quit technical to write the explicit expression for the domination polynomial of $\Gamma(\mathbb{Z}_{n}) $ for general value of $n.$ However, we have the following consequence from Theorem \ref{dom of G(zn)} presenting exact expression for domination polynomial of $\Gamma(\mathbb{Z}_{n})$ for special values of $n.$

\begin{theorem}\label{domination poly of zn}
	Let $ \Gamma(\mathbb{Z}_{n}) $ be the co-maximal graph of $ \mathbb{Z}_{n}, n\geq 2 $. Then the following hold.
	\begin{itemize}
		\item[\bf (i)] For prime $p=n$, the domination polynomial of $ \Gamma(\mathbb{Z}_{n}) $ is   
		$$D( \Gamma(\mathbb{Z}_{n}),x)=\sum_{i=1}^{n}\binom{n}{i}x^{i}.$$
		\item[\bf (ii)] For $ n=p^{m}$  with  prime $p$ and $ m\geq 2 $ is integer, the domination polynomial of $ \Gamma(\mathbb{Z}_{n}) $ is 
		\[ D(\Gamma(\mathbb{Z}_{n}),x)=(1+x)^{p^{m-1}}\sum_{i=1}^{p}\binom{p}{i}x^{i}+x^{p^{m-1}}.\]
	\end{itemize}
\end{theorem}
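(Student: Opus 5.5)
For (i) and (ii) I plan to determine the structure of $\Gamma(\mathbb{Z}_n)$ from Lemma~\ref{joined union of co-maximal graph} and then apply the join and union formulas \eqref{eq union} and \eqref{eq join}, with $D(K_m,x)=(1+x)^m-1$ and $D(\overline{K}_m,x)=x^m$.

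\emph{Part (i).} Let $n=p$ be prime. Then $\mathbb{Z}_p$ has no proper divisors, so $G_2$ is empty. Every nonzero element is a unit, and $0$ is adjacent to every unit $u$ because $0R+uR=R$. Hence $\Gamma(\mathbb{Z}_p)=K_{p-1}\vee K_1=K_p$. Every nonempty subset of $K_p$ is dominating, so
\[D(\Gamma(\mathbb{Z}_p),x)=(1+x)^p-1=\sum_{i=1}^{p}\binom{p}{i}x^i,\]
which is the claimed formula.

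\emph{Part (ii).} Let $n=p^m$ with $m\ge 2$. The nonunits of $\mathbb{Z}_n$ are exactly the multiples of $p$, and there are $p^{m-1}$ of them, including $0$. Any two nonunits $a,b$ satisfy $aR+bR\subseteq pR\neq R$, so they are pairwise nonadjacent. Thus $\{0\}\cup G_2\cong\overline{K}_{p^{m-1}}$, and by Lemma~\ref{joined union of co-maximal graph} (ii),
\[\Gamma(\mathbb{Z}_{p^m})=K_{\phi(n)}\vee\overline{K}_{p^{m-1}}.\]
Applying \eqref{eq join} with $n_1=\phi(n)$ and $n_2=p^{m-1}$ gives
\begin{align*}
D&=\big((1+x)^{\phi(n)}-1\big)\big((1+x)^{p^{m-1}}-1\big)+(1+x)^{\phi(n)}-1+x^{p^{m-1}}\\
&=(1+x)^{p^{m-1}}\big((1+x)^{\phi(n)}-1\big)+x^{p^{m-1}}.
\end{align*}
This agrees with Theorem~\ref{dom of G(zn)} (ii): there $\overline{n}=p^{m-1}$, and the $G_2$ part contributes nothing beyond the isolated block. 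The last step is to rewrite $(1+x)^{\phi(n)}-1=\sum_{i=1}^{\phi(n)}\binom{\phi(n)}{i}x^i$ and match it with the bracketed sum in the statement.

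\emph{The main obstacle is this final matching, and it does not go through in general.} The statement's inner sum runs to $p$ with coefficients $\binom{p}{i}$, which equals $(1+x)^p-1$. That coincides with $(1+x)^{\phi(n)}-1$ only when $\phi(p^m)=p^{m-1}(p-1)=p$, that is, only for $p=2$, $m=2$, $n=4$. In that case the formula $(1+x)^2(2x+x^2)+x^2$ can be checked directly on $K_2\vee\overline{K}_2=C_4$.

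For every other $p^m$ the literal right-hand side has degree $p^{m-1}+p<n$. It therefore cannot be the domination polynomial, since $V(G)$ itself is dominating and contributes $x^n$. For example, $n=8$ gives degree $6$, not $8$.

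So the argument above proves (i) as stated. For (ii) it proves the formula with $\phi(n)$ in place of $p$, both as the upper summation limit and in the binomial coefficients. It establishes the statement exactly as written only for $n=4$.
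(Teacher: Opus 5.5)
Your route is the paper's route: $\Gamma(\mathbb{Z}_p)\cong K_p$ for (i), and $\Gamma(\mathbb{Z}_{p^m})\cong K_{\phi(p^m)}\vee\overline{K}_{p^{m-1}}$ followed by the join formula for (ii). Your objection to (ii) is also correct.

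The paper's proof reaches the same intermediate expression $(1+x)^{p^m}-(1+x)^{p^{m-1}}+x^{p^{m-1}}$. It then rewrites it as $(1+x)^{p^{m-1}}\big((1+x)^{p}-1\big)+x^{p^{m-1}}$. That factorization is wrong: the exponent should be $p^m-p^{m-1}=\phi(p^m)$, which equals $p$ only when $n=4$. The correct formula is the one you derived,
\[
D(\Gamma(\mathbb{Z}_{p^m}),x)=(1+x)^{p^{m-1}}\sum_{i=1}^{\phi(p^m)}\binom{\phi(p^m)}{i}x^{i}+x^{p^{m-1}}.
\]
There is a quicker witness than your degree count. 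The coefficient of $x$ counts dominating vertices, and for $m\ge 2$ these are exactly the $\phi(p^m)$ units, since each nonunit misses another nonunit. The printed formula gives $p$ instead.

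The error carries into Section 3. The worked example for $n=2^5$ has degree $18$ and linear coefficient $2$, whereas the true polynomial has degree $32$ and linear coefficient $16$.

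One slip in your $n=4$ check: $K_2\vee\overline{K}_2$ is not $C_4$. The two units $1$ and $3$ are adjacent, so the graph is $K_4$ minus an edge. The value $(1+x)^2(2x+x^2)+x^2=2x+6x^2+4x^3+x^4$ is correct for that graph. On $C_4$ the check would fail, because $C_4$ has no dominating vertex and $D(C_4,x)=6x^2+4x^3+x^4$.
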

\begin{proof}
For prime $p=n$, $ \Gamma(\mathbb{Z}_{n})\cong K_{n}$ as $G_{n}$ is empty. Thus, the result follows.\\
For $n=p^{m}$, where $m\geq 2$ is a positive integer. For $ n=p^{m}, m\geq 2, $ the proper divisors of $ n $ are $ p^{i},  $ with $ i=1,\dots,m-1. $ Obviously, for $i\neq j$, $ (p^{i},p^{j})=1 $, for every $ 1\leq i, j\leq m-1. $ Thus $ G_{2} $ is a totally disconnected graph with cardinality
$$\sum_{i}^{m-1} |A_{p^{i}}|=\sum_{i=1}^{m-1}\phi(p^{i})=p^{m-1}-1, ~\text{since} ~|A_{p^{i}}|=\phi\left(\frac{p^{m}}{p^{i}}\right)=\phi(p^{m-i}),  i=1,\dots, m-1. $$ Thus,  $ \Gamma(\mathbb{Z}_{n}) $ can be written as
\begin{align*}
	\Gamma(\mathbb{Z}_{n})\cong K_{\phi(p^{m})}\vee \Big(\{0\}\cup \overline{K}_{p^{m-1}-1}\Big)=K_{p^{m-1}(p-1)}\vee \overline{K}_{p^{m-1}}.
\end{align*}
Therefore, by \eqref{eq join}, we have
\begin{align*}
	D(\Gamma(\mathbb{Z}_{n}),x)&=\big((1+x)^{p^{m-1}(p-1)}-1\big)\big((1+x)^{p^{m-1}}-1\big)+(1+x)^{p^{m-1}(p-1)}-1+x^{p^{m-1}}\\
	&=(1+x)^{p^{m}}-(1+x)^{p^{m-1}}+x^{p^{m-1}}=(1+x)^{p^{m-1}}\big((1+x)^{p}-1\big)+x^{p^{m-1}}\\
	&=(1+x)^{p^{m-1}}\sum_{i=1}^{p}\binom{p}{i}x^{i}+x^{p^{m-1}}
\end{align*}
\end{proof}

\begin{theorem}\label{domination poly of zn n=pq}
	Let $G\cong  \Gamma(\mathbb{Z}_{n}) $ be the co-maximal graph of $ \mathbb{Z}_{n}, n\geq 2 $. Then the following hold.
	\begin{itemize}
		\item[\bf (i)] For $ n=pq,$  where $ (p<q) $ are primes, the domination polynomial of $ \Gamma(\mathbb{Z}_{n}) $ is 
		\begin{align*}
		D(\Gamma(\mathbb{Z}_{n}),x)&=\sum_{i=0}^{pq}\binom{pq}{i}x^{i}-\sum_{i=0}^{p+q-1}\binom{p+q-1}{i}x^{i}+x\sum_{i=1}^{p-1}\binom{p-1}{i}x^{i}\sum_{j=1}^{q-1}\binom{q-1}{j}x^{j}\\&\quad+x^{p}+x^{q}.
		\end{align*}
		\item[\bf (ii)] For $ n=p^{n_{1}}q^{n_{2}},$  where $ (p<q) $ are primes and $ n_{1}, n_{2} $ are positive integers, the independent polynomial of $ \Gamma(\mathbb{Z}_{n}) $ is 
		\begin{footnotesize}
			\begin{align*}
			D(\Gamma(\mathbb{Z}_{n}),x)&=\sum_{i=0}^{n}\binom{n}{i}x^{i}-\sum_{i=0}^{n-\phi(n)}\binom{n-\phi(n)}{i}x^{i}+x^{\frac{n}{pq}}\sum_{i=1}^{\frac{n(q-1)}{pq}}\binom{\frac{n(q-1)}{pq}}{i}x^{i}\sum_{j=1}^{\frac{n(p-1)}{pq}}\binom{\frac{n(p-1)}{pq}}{j}x^{j}\\
			&\quad+x^{\frac{n(q-1)}{pq}}+x^{\frac{n(p-1)}{pq}}.
		\end{align*}
		\end{footnotesize}
	\end{itemize}
\end{theorem}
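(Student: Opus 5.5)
The plan is to use the join decomposition of Lemma~\ref{joined union of co-maximal graph}, $\Gamma(\mathbb{Z}_n)=K_{\phi(n)}\vee(\{0\}\cup G_2)$. We identify $G_2$ explicitly as a complete bipartite graph, together with some isolated vertices in case (ii). We then apply \eqref{eq union} and \eqref{eq join}, exactly as in the proof of Theorem~\ref{dom of G(zn)}. The one auxiliary formula needed is
\[
D(K_{a,b},x)=\big((1+x)^a-1\big)\big((1+x)^b-1\big)+x^a+x^b .
\]
This follows from \eqref{eq join} applied to $K_{a,b}=\overline{K}_a\vee\overline{K}_b$ with $D(\overline{K}_m,x)=x^m$.

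\textbf{Case (i), $n=pq$.} The proper divisors are $p$ and $q$. They are coprime, so $G_n=K_2$. By Lemma~\ref{adjacency relations of proper divisor graph}, $A_p$ and $A_q$ are independent sets of sizes $\phi(q)=q-1$ and $\phi(p)=p-1$, and every element of $A_p$ is adjacent to every element of $A_q$. Hence $G_2\cong K_{q-1,p-1}$. Since $\{0\}$ is an isolated $K_1$, \eqref{eq union} gives
\[
D(\{0\}\cup G_2,x)=x\,D(K_{q-1,p-1},x).
\]
The join formula \eqref{eq join} with $n_1=\phi(n)$ and $n_2=n-\phi(n)=p+q-1$ then gives
\[
\big((1+x)^{\phi(n)}-1\big)\big((1+x)^{p+q-1}-1\big)+(1+x)^{\phi(n)}-1+x\,D(K_{q-1,p-1},x).
\]
The first two terms simplify to $(1+x)^{pq}-(1+x)^{p+q-1}$, as in Theorem~\ref{dom of G(zn)}. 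The last term equals $x\big((1+x)^{q-1}-1\big)\big((1+x)^{p-1}-1\big)+x^{q}+x^{p}$. Expanding $(1+x)^m-1=\sum_{i\ge1}\binom{m}{i}x^i$ by the binomial theorem yields the stated formula.

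\textbf{Case (ii), $n=p^{n_1}q^{n_2}$.} I would sort the nonzero non-units by which of $p,q$ divide them:
\begin{itemize}
\item[(a)] $p\mid x$, $q\nmid x$: there are $n/p-n/(pq)=\frac{n(q-1)}{pq}$ such elements.
\item[(b)] $q\mid x$, $p\nmid x$: there are $\frac{n(p-1)}{pq}$ such elements.
\item[(c)] $pq\mid x$, $x\neq 0$: there are $\frac{n}{pq}-1$ such elements.
\end{itemize}
Using Lemma~\ref{adjacency relations of proper divisor graph}, two classes $A_{d_i},A_{d_j}$ are joined exactly when $(d_i,d_j)=1$. So class (a) is independent, class (b) is independent, and every (a)-vertex is adjacent to every (b)-vertex. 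A vertex of class (c) has a divisor sharing a prime with every other proper divisor, so it is isolated in $G_2$. Thus
\[
\{0\}\cup G_2\cong \overline{K}_{n/(pq)}\cup K_{\frac{n(q-1)}{pq},\frac{n(p-1)}{pq}},
\]
and by \eqref{eq union} its domination polynomial is $x^{n/(pq)}D(K_{a,b},x)$ with $a=\frac{n(q-1)}{pq}$ and $b=\frac{n(p-1)}{pq}$. Plugging this into \eqref{eq join} with $n_2=n-\phi(n)$ and expanding the binomials gives the formula.

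The main point of care is the bookkeeping in (ii): checking that the class sizes from $|A_{d}|=\phi(n/d)$ sum to the counts above, and verifying the isolation of class (c). There is also a discrepancy to flag. My derivation multiplies the two terms $x^{a}+x^{b}$ by $x^{n/(pq)}$ as well, since the isolated vertices must lie in every dominating set not using a vertex of $K_{\phi(n)}$. This agrees with (i) at $n=pq$, where it yields $x^p+x^q$. So I would read the last line of (ii) as lying inside the factor $x^{n/(pq)}$, and prove it in that corrected form.
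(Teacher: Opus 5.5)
Your proof is the paper's own route: decompose $\Gamma(\mathbb{Z}_n)=K_{\phi(n)}\vee(\{0\}\cup G_2)$, identify $\{0\}\cup G_2$ as $\{0\}\cup K_{q-1,p-1}$ (resp.\ $\overline{K}_{n/(pq)}\cup K_{n(q-1)/(pq),\,n(p-1)/(pq)}$), and apply \eqref{eq union} and \eqref{eq join}, so part (i) is fine as written. Your correction to (ii) is right, and the paper's proof makes exactly the slip you describe by adding $x^{n(q-1)/(pq)}+x^{n(p-1)/(pq)}$ outside the factor $x^{n/(pq)}$; as a result the printed formula is false for every $n_1,n_2$ (already at $n=6$ it gives coefficient $3$ for $x$, although only the units $1$ and $5$ dominate alone), and the correct final terms are $x^{n/p}+x^{n/q}$.
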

\noindent\begin{proof}
	(i) For $ n=pq, $ with primes $ p<q $, we see that each vertex of $A_{p} $ is adjacent to every vertex of $A_{q}$ as $(p,q)=1$. Thus, in this case $G_{2}$ is the complete bipartite graph $K_{p-1,q-1}$. So, $ G $ can be written as $K_{\phi(n)}\vee (\{0\}\cup \overline{K}_{p-1}\vee \overline{K}_{q-1}).$ Therefore, with \eqref{eq union} and \eqref{eq join}, the domination polynomial of $G_{2}$ is
	$$D(G_{2},x)=\big((1+x)^{p-1}-1\big)\big((1+x)^{q-1}-1\big)+x^{p-1}+x^{q-1},$$
	and that of $\{0\}\cup G_{2} $ is 
	$$ D(G_{2},x)=x\big((1+x)^{p-1}-1\big)\big((1+x)^{q-1}-1\big)+x^{p}+x^{q}.$$
	Hence, the domination polynomial of $G$ is
	\begin{align*}
		D(G,x)&=\big((1+x)^{\phi(n)}-1\big)\big((1+x)^{n-\phi(n)}-1\big)+(1+x)^{\phi(n)}-1+x\big((1+x)^{p-1}-1\big)\\
		&\quad\big((1+x)^{q-1}-1\big)+x^{p}+x^{q}\\
		&=(1+x)^{pq}-(1+x)^{p+q-1}+x\big((1+x)^{p-1}-1\big)\big((1+x)^{q-1}-1\big)+x^{p}+x^{q}\\
		&=\sum_{i=0}^{pq}\binom{pq}{i}x^{i}-\sum_{i=0}^{p+q-1}\binom{p+q-1}{i}x^{i}+x\sum_{i=1}^{p-1}\binom{p-1}{i}x^{i}\sum_{j=1}^{q-1}\binom{q-1}{j}x^{j}\\\\
		&\quad+x^{p}+x^{q}.
	\end{align*}
	(ii) For $ n=p^{n_{1}}q^{n_{2}} $, $ (p<q) $ are primes and $ n_{1},n_{2}\geq 2 $ are positive integers, the subgraph $G_{2}$ of  $ \Gamma(\mathbb{Z}_{n}) $ can  be found with the help of proper divisors of $n$. The proper divisors of $ n $ are $p^{i}, q^{j} $ and $ p^{i}q^{j}, $ with $ 0\leq i\leq n_{1}, ~ 0\leq j\leq n_{2} ,$ where  we ignore divisors $1$ and $p^{n_{1}}q^{n_{2}}.$ Based on these divisors, the vertex set of $ G_{2} $ can be written as
	\begin{align*}
		V(G_{2})=&\bigcup_{i=1}^{n_{1}}A_{p^{i}}\cup \bigcup_{i=1}^{n_{2}}A_{q^{i}} \cup \bigcup_{i=1}^{n_{1}} \bigcup_{j=1}^{n_{2}}A_{p^{i}q^{j}},
	\end{align*}
	where $i\neq n_{1}$ and $j\neq n_{2}$ simultaneously.
	Further by Lemma \ref{adjacency relations of proper divisor graph}, it follows that each vertex of $ A_{p^{i}} $ is adjacent to every vertex of $ A_{q^{j}} $, for each $i.$ Also,  no vertex of $ A_{p^{i}q^{j}} $ is adjacent to any vertex $A_{p^{i}}$ ($A_{q^{j}}$) as they are not relatively prime. Thus, the vertices in $ \bigcup_{i=1}^{n_{1}} \bigcup_{j=1}^{n_{2}}A_{p^{i}q^{j}}$ form an isolated vertex set of cardinality 
	\begin{align*}
		\sum_{i=1}^{n_{1}}\sum_{j=1}^{n_{2}}\phi(p^{n_{1}-i}q^{n_{2}-j})=p^{n_{1}-1}q^{n_{2}-1}-1,
	\end{align*}
	with $i\neq n_{1}$ and $j\neq n_{2}$ simultaneously. Therefore, together with vertex $\{0\}$, there are $ p^{n_{1}-1}q^{n_{2}-1}$. Also, the cardinality of $\bigcup_{i=1}^{n_{1}}A_{p^{i}} $ is $\sum_{i=1}^{n_{1}}\phi\left(p^{n_{1}-i}q^{n_{2}}\right)=p^{n_{1}-1}q^{n_{2}-1}(p-1)  $ and that of $\bigcup_{i=1}^{n_{2}}A_{q^{j}}$ is $ p^{n_{1}-1}q^{n_{2}-1}(q-1) $. Thus $ G_{2} $ is the union of $\overline{K}_{p^{n_{1}}q^{n_{2}-1}-1} $ and $ \overline{K}_{p^{n_{1}-1}q^{n_{2}-1}(p-1)}\vee \overline{K}_{p^{n_{1}-1}q^{n_{2}-1}(q-1)} $. Thus, the structure of $G$ is $$K_{\phi(n)}\vee \Big(\overline{K}_{p^{n_{1}}q^{n_{2}-1}-1}\cup \Big(\overline{K}_{p^{n_{1}-1}q^{n_{2}-1}(p-1)}\vee \overline{K}_{p^{n_{1}-1}q^{n_{2}-1}(q-1)}\Big) \Big).$$
	Finally, by application of \eqref{eq union} and \eqref{eq join}, we have
	\begin{align*}
		D(G,x)&=\big((1+x)^{\phi(n)}-1\big)\big((1+x)^{n-\phi(n)}-1\big)+(1+x)^{\phi(n)}-1+x^{p^{n_{1}-1}q^{n_{2}-1}}\\
		&\quad \big((1+x)^{p^{n_{1}-1}q^{n_{2}-1}(q-1)}-1\big)\big((1+x)^{p^{n_{1}-1}q^{n_{2}-1}(p-1)}-1\big)\\
		&\quad +x^{p^{n_{1}-1}q^{n_{2}-1}(q-1)}+x^{p^{n_{1}-1}q^{n_{2}-1}(p-1)}\\
		&=x^{n}-(1+x)^{n-\phi(n)}+x^{\frac{n}{pq}} \big((1+x)^{\frac{n(q-1)}{pq}}-1\big)\big((1+x)^{\frac{n(p-1)}{pq}}-1\big)+x^{\frac{n(q-1)}{pq}}+x^{\frac{n(p-1)}{pq}}\\
		&=\sum_{i=0}^{n}\binom{n}{i}x^{i}-\sum_{i=0}^{n-\phi(n)}\binom{n-\phi(n)}{i}x^{i}+x^{\frac{n}{pq}}\sum_{i=1}^{\frac{n(q-1)}{pq}}\binom{\frac{n(q-1)}{pq}}{i}x^{i}\sum_{j=1}^{\frac{n(p-1)}{pq}}\binom{\frac{n(p-1)}{pq}}{j}x^{j}\\
		&\quad+x^{\frac{n(q-1)}{pq}}+x^{\frac{n(p-1)}{pq}}.
	\end{align*}
\end{proof}

{Next, we will evaluate the domination polynomial $D(G_{2},x)$, when $n$ is product of three primes in $\Gamma(\mathbb{Z}_{n}).$
For $n=pqr$ with primes $p<q<r$, by Theorem \ref{dom of G(zn)}, the domination polynomial of $\Gamma(\mathbb{Z}_{pqr})$ is given by 
\[ D(\Gamma(\mathbb{Z}_{pqr}),x)=(1+x)^{pqr}-(1+x)^{pqr-\phi(pqr)}+xD(G_{2},x), \]
where $D(G_{2},x)$ is the domination polynomial of connected component $G_{2}.$ 
By the definition and construction of $\Gamma(\mathbb{Z}_{pqr})$, its block diagram is shown in Figure \ref{fig 1}.

\begin{figure}[H]
	\centering
	\begin{tikzpicture}[
		font=\small,
		block/.style={draw, rounded corners, align=center, minimum width=30mm, minimum height=10mm, inner sep=2mm},
		clique/.style={block, double, very thick},
		iso/.style={circle, draw, minimum size=7mm, inner sep=0mm},
		complete/.style={very thick},
		joinstyle/.style={dashed, very thick} 
		]
		\node[clique] (U) {$U(\mathbb{Z}_{pqr})$\\[1mm] $K_{\phi(pqr)}$};
		\node[iso, left=18mm of U] (z) {$0$};
		\node[block, below left=20mm and 18mm of U] (Ap) {$A_{p}$\\[1mm] $\overline{K}_{\phi(qr)}$};
		\node[block, below right=20mm and 18mm of U] (Aq) {$A_{q}$\\[1mm] $\overline{K}_{\phi(pr)}$};
		\node[block, below=30mm of U, xshift=-8mm] (Ar) {$A_{r}$\\[1mm] $\overline{K}_{\phi(pq)}$};
		\node[block, below=10mm of Ap, xshift=2mm] (Aqr) {$A_{qr}$\\[1mm] $\overline{K}_{\phi(p)}$};
		\node[block, below=10mm of Aq, xshift=-2mm] (Apr) {$A_{pr}$\\[1mm] $\overline{K}_{\phi(q)}$};
		\node[block, below=10mm of Ar, xshift=22mm] (Apq) {$A_{pq}$\\[1mm] $\overline{K}_{\phi(r)}$};
		\draw[complete] (Ap) -- (Aq); \draw[complete] (Ap) -- (Ar); \draw[complete] (Aq) -- (Ar);
		\draw[complete] (Ap) -- (Aqr); \draw[complete] (Aq) -- (Apr); \draw[complete] (Ar) -- (Apq);
		\draw[joinstyle] (U) -- (z); \draw[joinstyle] (U) -- (Ap); \draw[joinstyle] (U) -- (Aq);
		\draw[joinstyle] (U) -- (Ar); \draw[joinstyle] (U) -- (Aqr); \draw[joinstyle] (U) -- (Apr); \draw[joinstyle] (U) -- (Apq);
		\node[draw, dashed, inner sep=6mm, fit=(Ap)(Aq)(Ar)(Aqr)(Apr)(Apq)] (G2box) {};
		\node[anchor=south, yshift=-10mm] at (G2box.north) {$G_{2}$};
	\end{tikzpicture}
	\caption{Block diagram of $\Gamma(\mathbb{Z}_{pqr})$.}
	\label{fig 1}
\end{figure}
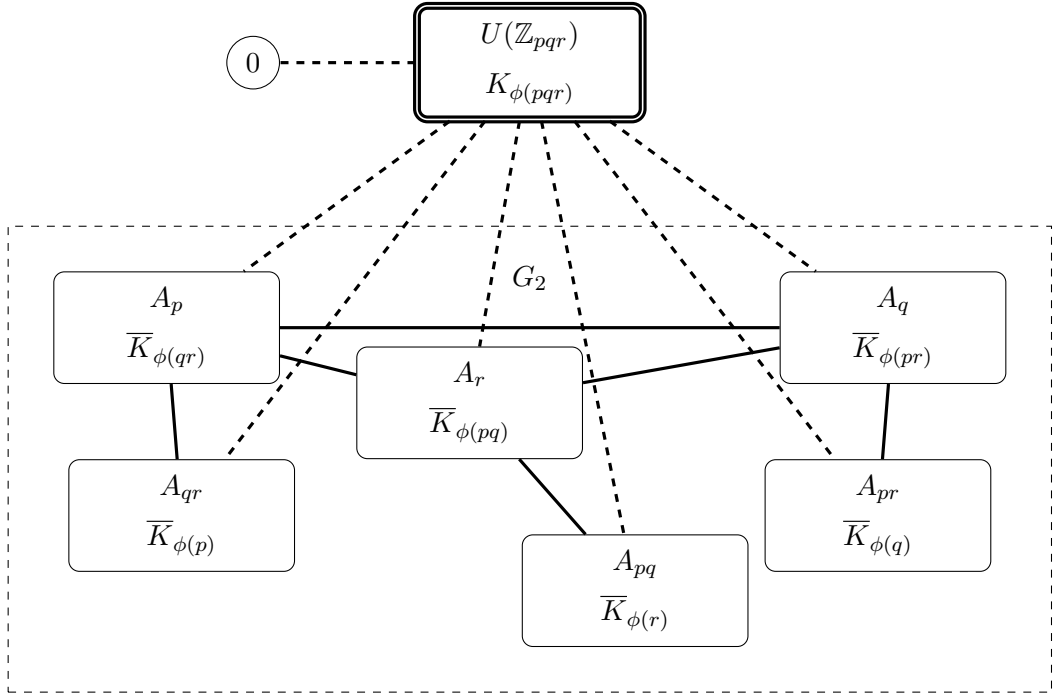

We discuss the possible domination sets of particular cardinalities and thereby the domination polynomial $D(G_{2},x)$ will follow. By definition of $G_{2}$, there are six $A_{d_{i}}$ for $d_{i}\in \{p,q,r,pq,pr,qr\}$ and the structure of $G_{2}$ is as: $K_{\phi(pq),\phi(pr), \phi(qr)}$ is a tripartite graph with extra $\phi(p)$ vertices adjacent to $\phi(qr)$ vertices, $\phi(r)$ vertices adjacent to $\phi(pq)$ and $\phi(q)$ vertices adjacent to $\phi(pr)$ vertices. Clearly, $2<\phi(p)<\phi(q)<\phi(r)<\phi(pq)<\phi(pr)<\phi(qr)$ and from structure of $G_{2},$ it follows that $\{x,y,z\}$ is a domination set, where $x\in A_{p}, y\in A_{q}$ and $z\in A_{r}.$ Thus, domination number of $G_{2}$ is $3$ and there are $\phi(pq)\phi(pr)\phi(qr)$ such sets. For domination sets $4\leq k\geq n $, we have the following choices: (a), Let $x\in A_{p}, y\in A_{q}$ and $z\in A_{r}$. Then 
\[ d(G_{2},k)=\sum_{\substack{a+b+c+d=k\\ a,b,c\geq 1}}\binom{\phi(pq)}{a}\binom{\phi(pr)}{b}\binom{\phi(qr)}{c}\binom{\phi(p)+\phi(q)+\phi(r)}{d}. \]
(b), Let $x\notin A_{p}, y\in A_{q}$ and $z\in A_{r}$. Then 
\[ d(G_{2},k)=\sum_{\substack{a+b+c=k-\phi(p)\\ a,b\geq 1}}\binom{\phi(pq)}{a}\binom{\phi(pr)}{b}\binom{\phi(q)+\phi(r)}{c}. \]
(c), Let $x\in A_{p}, y\notin A_{q}$ and $z\in A_{r}$. Then 
\[ d(G_{2},k)=\sum_{\substack{a+b+c=k-\phi(q)\\ a,b\geq 1}}\binom{\phi(pq)}{a}\binom{\phi(qr)}{b}\binom{\phi(p)+\phi(r)}{c}. \]
(d), Let $x\in A_{p}, y\in A_{q}$ and $z\notin A_{r}$. Then 
\[ d(G_{2},k)=\sum_{\substack{a+b+c=k-\phi(r)\\ a,b\geq 1}}\binom{\phi(pr)}{a}\binom{\phi(qr)}{b}\binom{\phi(p)+\phi(q)}{c}. \]
(e), Let $x\notin A_{p}, y\notin A_{q}$ and $z\in A_{r}$. Then 
\[ d(G_{2},k)=\sum_{\substack{a+b=k-\phi(p)-\phi(q)\\ a\geq 1}}\binom{\phi(pq)}{a}\binom{\phi(r)}{b}. \]
(f), Let $x\notin A_{p}, y\in A_{q}$ and $z\notin A_{r}$. Then 
\[ d(G_{2},k)=\sum_{\substack{a+b=k-\phi(p)-\phi(r)\\ a\geq 1}}\binom{\phi(pr)}{a}\binom{\phi(q)}{b}. \]
(g), Let $x\in A_{p}, y\notin A_{q}$ and $z\notin A_{r}$. Then 
\[ d(G_{2},k)=\sum_{\substack{a+b=k-\phi(q)-\phi(r)\\ a\geq 1}}\binom{\phi(qr)}{a}\binom{\phi(p)}{b}. \]
(h), Let $x\notin A_{p}, y\notin A_{q}$ and $z\notin A_{r}$. Then 
\[ d(G_{2},k)=\binom{\phi(p)}{\phi(p)}\binom{\phi(q)}{\phi(q)}\binom{\phi(r)}{\phi(r)}=1. \]
By considering the above cases, the result is formalized as follows. }
\begin{theorem}
	The domination polynomial of $G_{2}$ is
	\begin{align*}
		D(G_{2},x)=&
		\Big(\big((1+x)^{\phi(pq)}-1\big)(1+x)^{\phi(r)}+x^{\phi(r)}\Big)
		\Big(\big((1+x)^{\phi(pr)}-1\big)(1+x)^{\phi(q)}+x^{\phi(q)}\Big)\\
		&\Big(\big((1+x)^{\phi(qr)}-1\big)(1+x)^{\phi(p)}+x^{\phi(p)}\Big).
	\end{align*}
\end{theorem}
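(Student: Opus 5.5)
The plan is to exploit the block structure of $G_{2}$ shown in Figure \ref{fig 1}. I would group the six classes $A_{d}$ into three ``stars''. The first pairs the hub $A_{p}\cong\overline{K}_{\phi(qr)}$ with its private leaf class $A_{qr}\cong\overline{K}_{\phi(p)}$. The second pairs the hub $A_{q}\cong\overline{K}_{\phi(pr)}$ with the leaf class $A_{pr}\cong\overline{K}_{\phi(q)}$, and the third pairs the hub $A_{r}\cong\overline{K}_{\phi(pq)}$ with the leaf class $A_{pq}\cong\overline{K}_{\phi(r)}$. By Lemma \ref{adjacency relations of proper divisor graph}, each leaf class is adjacent only to its own hub, and the three hubs are mutually completely joined. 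Each class is independent by Lemma \ref{joined union of co-maximal graph}(i). The three factors of the stated product should correspond to these three stars. I would then show that a subset $S\subseteq V(G_{2})$ is dominating exactly when each restriction $S\cap(\text{hub}\cup\text{leaf})$ lies in an admissible family, and that the generating functions of these families are the three factors.

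For a single star, say hub $H=A_{r}$ and leaf class $L=A_{pq}$, the restriction $S_{r}=S\cap(H\cup L)$ falls into one of two disjoint types, matching the case analysis (a)--(h) preceding the statement.

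\emph{Type 1:} $S_{r}\cap H\neq\emptyset$ and $S_{r}\cap L$ is arbitrary. The vertices of $L$ are then dominated by the chosen hub vertex, and the generating function is $\big((1+x)^{\phi(pq)}-1\big)(1+x)^{\phi(r)}$.

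\emph{Type 2:} $S_{r}\cap H=\emptyset$. Here the leaves have no neighbour outside $H$, so every vertex of $L$ must itself lie in $S$. This forces $S_{r}=L$ and contributes $x^{\phi(r)}$; the hub is then dominated by $L$.

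Adding the two types gives the factor $\big((1+x)^{\phi(pq)}-1\big)(1+x)^{\phi(r)}+x^{\phi(r)}$. The same argument gives the other two factors.

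The remaining step is independence. I would check that the dominating sets of $G_{2}$ are exactly the triples $(S_{p},S_{q},S_{r})$ of admissible restrictions, so that by the product rule (the same bookkeeping as \eqref{eq union}) $D(G_{2},x)$ is the product of the three factors. The only interaction between stars is through the complete joins among the hubs. A chosen hub vertex in one star dominates the other two hubs completely. In the cases listed in (a)--(g), at least one hub is hit, and every unchosen hub vertex is dominated by a chosen vertex from a different hub, or else by its own full leaf class (Type 2). Case (h) corresponds to all three stars being of Type 2.

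I expect this independence step to be the main obstacle. One must verify that every unchosen vertex of a Type 1 hub has a neighbour in $S$: either a vertex of another hub, or a vertex of its own leaf class. I would first go through the eight cases (a)--(h) and match each one with a term of the expanded product. The delicate configurations are those in which exactly one hub is hit and the leaf class of that hub meets $S$ in fewer than all of its vertices. For these I would check directly, against the counts in (b)--(g), that they are accounted for exactly as the product formula requires. The formula then follows by summing the matched terms over $k$.
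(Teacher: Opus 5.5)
Your three-star decomposition is the same bookkeeping as the paper's cases (a)--(h), and your treatment of the leaf classes is correct. The independence step you single out, however, cannot be carried out, because it is false.

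Suppose only the star with hub $A_{r}$ is of Type 1 and the other two are of Type 2. An unchosen $v\in A_{r}$ has all its neighbours in $G_{2}$ inside $A_{p}\cup A_{q}\cup A_{pq}$. Since $S\cap A_{p}=S\cap A_{q}=\emptyset$, $v$ is dominated exactly when $S\cap A_{pq}\neq\emptyset$. Type 1 permits $\emptyset\neq S\cap A_{r}\subsetneq A_{r}$ together with $S\cap A_{pq}=\emptyset$, and such triples are not dominating. So the bad configuration is not a leaf class meeting $S$ in fewer than all its vertices, since a single chosen leaf dominates its whole hub. It is a leaf class missed entirely while its hub is only partly chosen. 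Your claim that every unchosen hub vertex is dominated by another hub or by a full leaf class fails exactly here.

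Concretely, take $n=30$ with $p=2,q=3,r=5$. Then $A_{5}=\{5,25\}$, $A_{10}=\{10,20\}$ and $A_{15}=\{15\}$. The set $S=\{5,10,15,20\}$ is counted by the product as $x\cdot x^{\phi(3)}\cdot x^{\phi(2)}$. Yet $25\notin S$ is adjacent to none of these vertices, since all of them are multiples of $5$.

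Checking against (b)--(g) will not rescue the argument. The paper's cases (e)--(g) allow the leaf exponent $b=0$ with the hub exponent $a$ strictly below the hub size, so they contain the same overcount. The stated identity is itself false.

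Carried out correctly, your star argument shows that the non-dominating admissible triples are precisely those with a single Type 1 star whose hub is partly chosen and whose leaf class is avoided. Write $F_{p}=\big((1+x)^{\phi(qr)}-1\big)(1+x)^{\phi(p)}+x^{\phi(p)}$, and define $F_{q}$, $F_{r}$ analogously for the other two factors in the statement. This gives
\begin{align*}
D(G_{2},x)={}&F_{p}F_{q}F_{r}-x^{\phi(q)+\phi(r)}\big((1+x)^{\phi(qr)}-1-x^{\phi(qr)}\big)\\
&-x^{\phi(p)+\phi(r)}\big((1+x)^{\phi(pr)}-1-x^{\phi(pr)}\big)-x^{\phi(p)+\phi(q)}\big((1+x)^{\phi(pq)}-1-x^{\phi(pq)}\big).
\end{align*}
The correction never vanishes, because every hub has at least two vertices. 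For $n=30$ it equals $270$ at $x=1$: the product counts $1527379$ sets, of which only $1527109$ are dominating.
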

With the similar idea and using theorem \ref{dom of G(zn)}, the above theory can be generalized to $n=p^{n_{1}}q^{n_{2}}r^{n_{2}}$, where $p<q<r$ are primes and $n_{i}$'s are positive integers.

The following result gives inequalities of the domination polynomial of $\Gamma(\mathbb{Z}_{n})$ and characterizes the values attaining them. 
\begin{theorem}
	If $G$ is a comaximlal graph of $\mathbb{Z}_{n}$.  Then, we have the following inequalities
	\[ D(G,x)\geq \begin{cases}
		(1+x)^{n}-(1+x)^{n-\phi(n)}+x & \text{if}~ n=\prod_{i=1}^{k}p_{i},\\
		(1+x)^{n}-(1+x)^{n-\phi(n)}+x^{p_{1}^{n_{1}-1}} & \text{if}~n=\prod_{i=1}^{k}p_{i}^{n_{i}},
	\end{cases} \]
	with equality if and only if $k=1. $
\end{theorem}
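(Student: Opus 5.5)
The plan is to start from the decomposition in Theorem~\ref{dom of G(zn)},
\[
D(G,x)=(1+x)^{n}-(1+x)^{n-\phi(n)}+x^{\overline{n}}D(G_{2},x),
\]
and to compare only the last summand $x^{\overline{n}}D(G_{2},x)$ with the monomial on the right-hand side. The first two terms are identical on both sides, so nothing needs to be done with them. I will read the inequality as an inequality of values for real $x\geq 1$. It cannot hold for all $x>0$: $D(G_{2},x)$ has zero constant term when $G_{2}$ is nonempty, so for $n=\prod_{i=1}^{k}p_{i}$ with $k\geq2$ the difference of the two sides is $x\,D(G_{2},x)-x<0$ for small $x>0$. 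On $x\geq1$ every nonempty dominating polynomial satisfies $D(H,x)\geq D(H,1)\geq 1$, since its coefficients are nonnegative integers and the full vertex set always contributes.

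\textbf{Equality when $k=1$.} I will treat $G_{2}$ as empty, with $D(\emptyset,x)=1$.
\begin{itemize}
\item For $n=p$ we have $\overline{n}=1$. Theorem~\ref{domination poly of zn}(i) gives $D(G,x)=(1+x)^{p}-1=(1+x)^{n}-(1+x)^{n-\phi(n)}+x$, because $n-\phi(n)=1$.
\item For $n=p^{m}$, Theorem~\ref{domination poly of zn}(ii) gives $D(G,x)=(1+x)^{p^{m}}-(1+x)^{p^{m-1}}+x^{p^{m-1}}$. Here $n-\phi(n)=p^{m-1}$ and $\overline{n}=p_{1}^{n_{1}-1}$.
\end{itemize}
So equality holds exactly in the stated form.

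\textbf{Strict inequality when $k\geq 2$.} In this case $G_{2}$ contains the nonempty classes $A_{p_{i}}$, so it is a nonempty graph.
\begin{itemize}
\item Squarefree case: $\overline{n}=1$ and $D(G_{2},x)$ is a nonzero polynomial with nonnegative coefficients. Its top coefficient comes from $V(G_{2})$, and there are further coefficients from other dominating sets (for example from the complete bipartite structure in Theorem~\ref{domination poly of zn n=pq}(i)). Hence $D(G_{2},x)>1$ for $x\geq 1$, which gives $xD(G_{2},x)>x$.
\item General case: $\overline{n}=\prod_{i} p_{i}^{n_{i}-1}\geq p_{1}^{n_{1}-1}$, so $x^{\overline{n}}\geq x^{p_{1}^{n_{1}-1}}$ for $x\geq 1$. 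Combined with $D(G_{2},x)>1$ this gives strict inequality.
\end{itemize}
The converse direction of the "if and only if" then follows: equality forces $D(G_{2},x)\equiv 1$, i.e.\ $G_{2}$ is empty, i.e.\ $k=1$.

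\textbf{Main obstacle.} The hard step is making precise what "$\geq$" means and fixing the convention for $D(G_{2},x)$ when $G_{2}$ is empty. The $k=1$ case of the general formula only matches the explicit formulas if one sets $D(\emptyset,x)=1$ and takes $G_{2}$ to be the part of $\{0\}\cup G_{2}$ remaining after the isolated vertices are factored out. I would first verify that this convention is consistent with Theorem~\ref{dom of G(zn)} for $n=p^{m}$. Then I would check that $D(G_{2},1)\geq 2$ whenever $k\geq 2$. For that check I would exhibit two distinct dominating sets of $G_{2}$: all of $V(G_{2})$, and $V(G_{2})$ with one vertex of $A_{p_{1}}$ removed, which is still dominated by any vertex of $A_{p_{2}}$ via Lemma~\ref{adjacency relations of proper divisor graph}.
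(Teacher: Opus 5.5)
Your argument follows the paper's own route: write $D(G,x)=(1+x)^{n}-(1+x)^{n-\phi(n)}+x^{\overline{n}}D(G_{2},x)$ using the join and union formulas, compare only the last summand, and obtain equality exactly when $G_{2}$ is empty, i.e.\ $k=1$. The paper never says in what sense ``$\geq$'' holds, and that reading matters: for $k\geq 2$ the inequality fails coefficientwise and for small $x>0$ (already for $n=6$ the coefficient of $x$ is $2$ on the left and $3$ on the right), so your restriction to real $x\geq 1$ is what makes the statement true, and your three justifications for it are sound (monotonicity, $\overline{n}\geq p_{1}^{n_{1}-1}$, and $D(G_{2},1)\geq 2$ from $V(G_{2})$ and $V(G_{2})$ minus a vertex of $A_{p_{1}}$).
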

\noindent\begin{proof}
	For $n=\prod_{i=1}^{k}p_{i}$, the graph $G $ can be written as
	\begin{align*}
		G\cong	K_{\phi(n)}\vee \left(K_{1}\cup G_{2} \right).
	\end{align*}
	Thus, with \ref{eq join}, we obtain
	$$D(G,x)=(1+x)^{n}-(1+x)^{n-\phi(n)}+D\big(H,x\big),$$
	where where $H=\{0\}\cup G_{2}$ is a subgraph of $G$ with order $n-\phi(n).$ Thus, from the above equation, we have 
	\begin{equation}\label{eq 1}
		D(G,x)\geq (1+x)^{n}-(1+x)^{n-\phi(n)}+x.
	\end{equation}
	For equality cases: if $k=1$ is a prime, we will show that $ D\big(H,x\big)=x$ which is possible if and only if $G_{2}$ is a null graphs as in this case $H=\{0\}\cup G_{2}=\{0\}.$
	So, with $n=p$, we see that $G_{2}$ is an empty graph, since there are only $p-1$ elements relative prime to $n$ in $\mathbb{Z}_{n}$ (equivalently, it means for $k\geq 2,$ $D(G_{2},x)$ is a non-trivial part as in Corollary \ref{domination poly of zn n=pq}). Thus, $ H=\{0\}$ and by \eqref{eq union}, it follows that domination polynomial of $ D(H,x)=x.$ And, we obtain 
	$$D(G,x)= (1+x)^{p}-(1+x)^{p-(p-1)}+x=(1+x)^{p}-1. $$
	Conversely, for $n=p$ in Inequality \ref{eq 1}, it is easy to verify that equality holds in \eqref{eq 1} and $ D(G,x)=(1+x)^{p}-1.$\\
	For, $n=\prod_{i=1}^{k}p_{i}^{n_{i}}$, we see that $G\cong K_{\phi(n)}\vee H,$ with $H=\Big(\overline{K}_{\frac{n}{\prod_{i=1}^k}}\bigcup G_{2}\Big)$ and order of $H$ is $n-\phi(n).$ Let $D(H,x)$ be the domination polynomial of $H.$ Then, with \eqref{eq join}, we have
	\[ D(G,x)=(1+x)^{n}-(1+x)^{n-\phi(n)}+D(H,x). \]
	Now, keeping in view structure of $H,$ and using \eqref{eq union}, we obtain
	\[ D(G,x)=(1+x)^{n}-(1+x)^{n-\phi(n)}+x^{p_{1}^{n_{1}-1}p_{1}^{n_{2}-1}\dots p_{k}^{n_{k}-1}}D(H^{\prime},x), \]
	where $D(H^{\prime},x)$ is the domination polynomial of the connected component $G_{2}.$
	From, the above expression, we obtain 
	\begin{equation}\label{eq 2}
		D(G,x)\geq(1+x)^{n}-(1+x)^{n-\phi(n)}+x^{p_{1}^{n_{1}-1}}.
	\end{equation}
	If $k=1,$ then the domination polynomial is 
	\[ D(G,x)=\big((1+x)^{\phi(n)}-1\big)\big((1+x)^{n-\phi(n)}-1\big)+(1+x)^{\phi(n)}-1+x^{n-\phi(n)}, \]
	as in this case $G_{2}$ is empty and $G\cong K_{\phi(n)}\vee \overline{K}_{n-\phi(n)}$. After solving the above expression, it coincides with the equality case of \eqref{eq 2}. Next, if $2\geq 2,$ then $G_{2}$ is a connected subgraph of order $n-\phi\left (\frac{n}{\prod_{i=1}^{k}p_{i}}\right )$ of $G$ and in this case $D(G_{2},x)$ is a non-trivial entity ( as in Corollary \ref{domination poly of zn n=pq} (ii)). Also, form $k\geq 2,$ there are $\frac{n}{\prod_{i=1}^{k}p_{i}}$ of degree $\phi(n)$, which in turn add $x^{\frac{n}{\prod_{i=1}^{k}p_{i}}}D(G_{2},x)$ to $ D(G,x)$. Hence, equality cannot occur in \eqref{eq 2} if $k$ is at least $2.$
	
\end{proof}

\section{Unimodal and log-concave properties of $\Gamma(\mathbb{Z}_{n})$}\label{section 3}
\paragraph{}
For a real sequence of numbers $e_{1},e_{2},\dots,e_{n}$, if there is an index $\ell$ ($1\leq \ell \leq n$) such that the following holds
\begin{enumerate}
	\item $e_{1}\leq e_{2}\leq \dots\leq e_{\ell-1}\leq e_{\ell}$
	\item $e_{\ell}\geq e_{\ell+1}\geq \dots \geq e_{n-1}\geq e_{n}.$
\end{enumerate}
Such a sequence $e=\{e_{i}\}_{i=1}^{n}$ is known as \emph{unimodal} sequence. The index $\ell$ is called the \textit{mode} or \emph{peak} of sequence $e.$ A sequence of numbers that approaches toward a maximum term and then decreases (at least does not increase) is referred to as a unimodal sequence. We note that the mode of sequence $e$ may not be unique.
 A polynomial $ e(x)=\sum_{i=0}^{n} e_{i}x^{i} $ is unimodal polynomial (or simply unimodal) if its sequence of coefficients $ e_{i} $ is a unimodal sequence. Equivalently, $e(x)$ is  unimodal if its coefficient sequence has a single peak, meaning that it rises to a certain point before falling. We note that with $ \ell= 0 $, $e$ decreases and with  $\ell = n $, $e$ increases. With such situation $e(x)$ is unimodal.
 The number of changes of directions (increasing or decreasing) in the sequence $e$ is known as the \textit{oscillations} of $e $ and for polynomial $e(x)$, it is denoted by $ \zeta(e(x)) $.
 For a unimodal polynomial $e(x)$ with $\ell=0$ (or $\ell=n$), $\zeta{e(x)}=1$. For polynomial $ 1 + 7x + 2020x^{2} + 1990x^{3} +2024x^{4} + 2000x^{5} $, we observe that there are two changes of increasing (decreasing) directions. Thus, its oscillations are $2$ and is not a unimodal polynomial. Therefore, it is evident that $ e(x) $  is a unimodal polynomial if $ \zeta(e(x))\leq 1. $

 A polynomial $ e(x) $ is \textit{symmetric} if $ e_{i}=e_{n-i}, $ for $ 0\leq i\leq n $ and it is \emph{log-concave} provided 
\begin{equation}\label{log con 1}
	e_{i}^{2}\geq e_{i-1}e_{i+1},~  \text{for all}~ 1\leq i\leq n-1. 
\end{equation} 
Assume that $p(x)$ contains only real zeros and that $a_{i}$s are nonnegative numbers. Then the Newton's inequalities are the fundamental approach for unimodality and log-concave problems \cite{hardy}.
\begin{equation}\label{log con 2}
	e_{i}^{2}\geq e_{i-1}e_{i+1}\left(1+\frac{1}{i}\right)\left(1+\frac{1}{n-i}\right),~ i=1,\dots,n-1. 
\end{equation}
For example the coefficient sequence  $\left\{\binom{n}{i}\right\}_{i=0}^{n} $ in $(1+x)^{n}$ is a log-concave polynomial. The product of two log-concave polynomial is a log-concave polynomial \cite{levitsurvey}. It is known that a log-concave sequence of positive numbers is unimodal but other way is not true.
A simple example of log-concave polynomial is $ 3 + 8x + 11x^{2} + 13x^{3} + 15x^{4} + 17x^{5} +19x^{6}+13x^{7}+x^{8} $ (also unimodal). The polynomial $ 1 + 7x^{2} + 8x^{3} + 19x^{4} + 13x^{5}+x^{6}+x^{7} $ is not unimodal as $e_{1}=0\ngeq e_{0}\cdot e_{2}= 3\cdot 11, 8^{2}\ngeq 7\cdot 19$ and $e_{6}=1\ngeq 13. $ The  polynomial $1+3x+4x^{2}+5x^{3}+2x^{4}+x^{5}$ is unimodal but not log-concave. Researchers studying algebraic combinatorics have long been deeply involved in unimodality (log-concave) problems and zero related to graph polynomials. The chromatic polynomial of $G$ is conjectured to be unimodal \cite{read} and even log-concave. The matching polynomial of $G$ has only real zeros \cite{schwenk}. The unimodality problems associated with independence polynomials of $G$ have been the subject of an extensive collection of investigation in recent years \cite{levitsurvey}. The zeros of some domination polynomials were studied in \cite{alikhani}. A recent study on unimodal behaviour of domination polynomials can be see in \cite{brown1}. Additional information regarding domination polynomials and their characteristics can be explored in \cite{beaton}.
 Product of two log-concave polynomial is log-concave, and if one polynomial is log-concave and other is unimodal, then their product is unimodal.  
More about log-concave and unimodal polynomial can be seen in \cite{stanley}.

From Theorem \ref{domination poly of zn}, the domination polynomial of $\mathbb{Z}_{p}$ with prime $p$ is 
\[ D(\mathbb{Z}_{p},x)=\binom{n}{1}x+\binom{n}{2}x^{2}+\dots+\binom{n}{n-1}x^{n-1}+x^{n}, \]
which is clearly both log-concave and unimodal.

For $n=p^{m}$, the domination polynomial is
\[ D(\Gamma(\mathbb{Z}_{n}),x)=(1+x)^{p^{m-1}}\sum_{i=1}^{p}\binom{p}{i}x^{i}+x^{p^{m-1}}. \] 
The first factor in above polynomial is symmetric, so is second while $D(\Gamma(\mathbb{Z}_{n}),x)$ is not. However, it retains all the combinatorial properties of its binomial coefficients as can be seen below
\begin{footnotesize}
\begin{equation}\label{poly zp^m}
		\begin{aligned}
	D(\Gamma(\mathbb{Z}_{n}),x)&=px+\sum_{i=1}^{2}\sum_{\substack{j\\j=2-i}}\binom{p}{i}\binom{\alpha}{j}x^{2}+\sum_{i=1}^{3}\sum_{\substack{j\\j=3-i}}\binom{p}{i}\binom{\alpha}{j}x^{3}+\sum_{i=1}^{p-1}\sum_{\substack{j\\j=p-1-i}}\binom{p}{i}\binom{\alpha}{j}x^{p-1}\\
	&\quad+\sum_{i=1}^{p}\sum_{\substack{j\\j=p-i}}\binom{p}{i}\binom{\alpha}{j}x^{p}+\sum_{i=1}^{p}\sum_{\substack{j\\j=p+1-i}}\binom{p}{i}\binom{\alpha}{j}x^{p+1}+\sum_{i=1}^{p-1}\sum_{\substack{j\\j=p+2-i}}\binom{p}{i}\binom{\alpha}{j}x^{p+2}\\
	&\quad+\dots+\left(\binom{p}{p-2}+p\alpha+\binom{\alpha}{\alpha-2}\right)x^{\alpha+p-2}+(\alpha+p)x^{\alpha+p-1}+x^{\alpha+p},
\end{aligned}
\end{equation}
\end{footnotesize}
where $\alpha=p^{m-1}.$ With the binomial property of coefficients of the above polynomial and their increasing then decreasing behaviour. It follows that $D(\Gamma(\mathbb{Z}_{n}),x)$ is both unimodal and log-concave polynomial. We make it precise in the following result.
\begin{proposition}
For prime power $n$, The independent domination polynomial $D(\Gamma(\mathbb{Z}_{n}),x)$ is log-concave and unimodal.
\end{proposition}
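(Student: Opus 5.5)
The plan is to work directly with the closed form from Theorem \ref{domination poly of zn} rather than with the expanded display \eqref{poly zp^m}. Write $\alpha=p^{m-1}$ and $N=p^{m}=\alpha p$. Then
\[ D(\Gamma(\mathbb{Z}_{n}),x)=(1+x)^{N}-(1+x)^{\alpha}+x^{\alpha}=\sum_{k=1}^{N}e_{k}x^{k}. \]
Its coefficients are $e_{k}=\binom{N}{k}-\binom{\alpha}{k}$ for $1\le k\le \alpha-1$, $e_{\alpha}=\binom{N}{\alpha}$ (the $-1$ and the $+1$ cancel), and $e_{k}=\binom{N}{k}$ for $k>\alpha$. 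In particular every $e_{k}$ with $1\le k\le N$ is positive, so the sequence has no internal zeros. Hence, once log-concavity is proved, unimodality follows from the standard fact recalled in Section \ref{section 3}. The case $m=1$ ($\alpha=1$) is just $(1+x)^{p}-1$, whose coefficients are the binomial sequence with $e_{0}$ deleted, and this is log-concave trivially. So I assume $\alpha\ge 2$.

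\emph{Log-concavity, region by region.} For $k\ge \alpha+1$ the three coefficients $e_{k-1},e_{k},e_{k+1}$ are all binomial coefficients $\binom{N}{\cdot}$, so $e_{k}^{2}\ge e_{k-1}e_{k+1}$ is the log-concavity of $(1+x)^{N}$. At $k=\alpha$ I use $e_{\alpha-1}\le \binom{N}{\alpha-1}$, which reduces the inequality again to the binomial one. For $1\le k\le \alpha-2$ all three coefficients have the form $f_{j}=\binom{N}{j}-\binom{\alpha}{j}$. These are the coefficients of
\[ \big((1+x)^{N-\alpha}-1\big)(1+x)^{\alpha}, \]
since $f_{j}$ counts the $j$-subsets of an $N$-set that are not contained in a fixed $\alpha$-subset. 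Both factors are log-concave with no internal zeros: the first is a binomial sequence with its constant term removed. I then invoke the fact that a product of log-concave polynomials is log-concave (\cite{levitsurvey}), which gives $f_{k}^{2}\ge f_{k-1}f_{k+1}$ for these $k$. The case $k=0$ is trivial because $e_{0}=0$.

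\emph{The boundary index $k=\alpha-1$ (main obstacle).} This is the one place where the mixed form matters, since $e_{\alpha}=f_{\alpha}+1$ is slightly bigger than $f_{\alpha}$. Put $a=\binom{N}{\alpha-1}$, $c=\binom{N}{\alpha-2}$ and $d=\binom{N}{\alpha}$. The inequality needed is
\[ (a-\alpha)^{2}\ \ge\ \Big(c-\binom{\alpha}{2}\Big)d. \]
It suffices to show $a^{2}-cd\ge 2\alpha a-\binom{\alpha}{2}d$. For the left side I would use Newton's inequality \eqref{log con 2} for $(1+x)^{N}$, which gives $a^{2}-cd\ge a^{2}\,\frac{N+1}{\alpha(N-\alpha+2)}$. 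Combined with $N\ge 2\alpha$, this makes $a$ of exponential size in $\alpha$, so the inequality holds for all sufficiently large $\alpha$. For the right side I keep the $-\binom{\alpha}{2}d$ term, because the crude bound obtained by dropping it fails for $p=2$, $m=2$. The finitely many small pairs $(p,m)$ left over would be checked directly. For example, $n=4$ gives the coefficients $2,6,4,1$, which are log-concave.
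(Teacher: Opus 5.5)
Your proof is correct, and it takes a genuinely different and more complete route than the paper. The paper does not really prove the proposition. It expands the polynomial in the display preceding the proposition and asserts that the ``binomial property'' and the ``increasing then decreasing behaviour'' of the coefficients give log-concavity, without checking a single inequality. Moreover, that display, like the final form stated in Theorem~\ref{domination poly of zn}(ii), rests on the simplification $(1+x)^{p^{m}}-(1+x)^{p^{m-1}}=(1+x)^{p^{m-1}}\big((1+x)^{p}-1\big)$, which holds only for $n=4$. The resulting polynomial has degree $p^{m-1}+p$ instead of $p^{m}$; compare the degree-$18$ example for $n=2^{5}$, whose linear coefficient $2$ should be $\phi(32)=16$. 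So starting from $(1+x)^{N}-(1+x)^{\alpha}+x^{\alpha}$ is essential, not cosmetic. You should derive it directly from $\Gamma(\mathbb{Z}_{p^{m}})\cong K_{\phi(p^{m})}\vee\overline{K}_{p^{m-1}}$ and \eqref{eq join}, where it is the middle line of the paper's proof, rather than attribute it to the theorem's displayed formula.

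Your decomposition then isolates exactly what needs proof. Away from the single perturbed coefficient $e_{\alpha}=f_{\alpha}+1$, everything follows either from the binomial sequence or from the convolution $\big((1+x)^{N-\alpha}-1\big)(1+x)^{\alpha}$. The index $k=\alpha-1$ is the only genuinely new inequality. There, your relation $a^{2}-cd=a^{2}\frac{N+1}{\alpha(N-\alpha+2)}$ is actually an identity. The ``sufficiently large'' step can be made explicit:
\begin{itemize}
\item Since $\binom{N}{\alpha-1}$ increases in $N$ and $N\ge 2\alpha$, you get $a\ge\binom{2\alpha}{\alpha-1}\ge 2\alpha^{2}$ for all $\alpha\ge 4$.
\item The value $\alpha=3$ forces $n=9$, where $a=36\ge 18$.
\item In both cases $a\frac{N+1}{\alpha(N-\alpha+2)}\ge 2\alpha$, which gives the bound even without the $\binom{\alpha}{2}d$ term.
\end{itemize}
So $n=4$ is the only case that needs that term, and there the inequality holds with equality, $10\ge 10$. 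Your approach yields an actual proof for the correct polynomial. It also shows that the non-binomial correction can threaten log-concavity at only one index. The paper's remark amounts to a plausibility check on the wrong polynomial.
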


For $p=2$ and $m=5,$ the domination polynomial of $\Gamma(\mathbb{Z}_{2^{5}}) $ is
\begin{footnotesize}
	\begin{equation}\label{example p^m}
		\begin{aligned}
	&2 x + 33 x^2 + 256 x^3 + 1240 x^4 + 4200 x^5 + 10556 x^6 + 20384 x^7 + 30888 x^8 + 37180 x^9 \\
	&+ 35750 x^{10} + 27456 x^{11} + 	16744 x^{12} + 8008 x^{13} + 2940 x^{14} + 800 x^{15} + 153 x^{16} + 
	18 x^{17} + x^{18},
\end{aligned}
	\end{equation}
\end{footnotesize}
which is clearly log-concave and unimodal.

Similarly, rewriting the domination polynomial of $\mathbb{Z}_{pq},$ we obtain
\begin{equation}\label{poly zpq}
	\begin{aligned}
	D(\mathbb{Z}_{pq},x)&=\left(\binom{pq}{1}-\binom{p+q-1}{1}\right)x+\left(\binom{pq}{2}-\binom{p+q-1}{2}\right)x^{2}\\
	&\quad +\left(\binom{pq}{3}-\binom{p+q-1}{3}+\binom{p-1}{1}\binom{q-1}{1}\right)x^{3}\\
	&\quad+\left(\binom{pq}{4}-\binom{p+q-1}{4}+\binom{p-1}{1}\binom{q-1}{2}+\binom{p-1}{2}\binom{q-1}{1}\right)x^{4}\\
	&\quad+\\
	&\quad~ \vdots\\
	&\quad+\left(\binom{pq}{q}-\binom{p+q-1}{q}+\sum_{i=1}^{p-1}\binom{p-1}{i}\binom{q-1}{q-1-i}+1\right) x^{q}\\
	&\quad+\\
	&\quad~ \vdots\\
	&\quad+\bigg(\binom{pq}{p+q-3}-\binom{p+q-1}{p+q-3}+\binom{q-1}{q-3}+\binom{q-1}{q-2}\binom{p-2}{p-3}\\
	&\quad+\binom{p-1}{p-3}\bigg)x^{p+1-3} +\bigg(\binom{pq}{p+q-2}-\binom{p+q-1}{p+q-2}+\binom{q-1}{q-2}\\
	&\quad+\binom{p-1}{p-2}\bigg)x^{p+q-2}+\binom{pq}{p+q-1}x^{p+q-1}+\dots+\binom{pq}{pq-1}x^{pq-1}+x^{pq}.
\end{aligned}
\end{equation}
Now, since $pq$ is greater or equal to $p+q-1$ and strictly grater than $p-1$ (and $q-1$). Thus, the binomial coefficients in $(1+x)^{pq}-(1+x)^{p+q-1}$ retain their unimodal and log-concave patter. Also, after evaluating $x\sum_{i=1}^{p-1}\binom{p-1}{i}x^{i}\sum_{j=1}^{q-1}\binom{q-1}{j}x^{j}+x^{p}+x^{q}$ and summing it up with coefficients of $(1+x)^{pq}-(1+x)^{p+q-1}$, it clear that the oscillation of coefficients of  $D(\mathbb{Z}_{pq},x)$ is unimodal (with exactly one oscillation) and log-concave. Thus, we state following result.
\begin{proposition}
	For prime power $n$, The independent domination polynomial $D(\Gamma(\mathbb{Z}_{n}),x)$ is log-concave and unimodal.
\end{proposition}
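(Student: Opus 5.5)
We read the statement literally, for $n=p^{m}$. The case $m=1$ is immediate from Theorem \ref{domination poly of zn}(i), since $(1+x)^{p}-1$ has binomial coefficients. So assume $m\ge 2$ and put $a=p^{m-1}$, $b=N-a=\phi(p^{m})$, where $N=p^{m}$.

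The plan is to work with the closed form obtained in the proof of Theorem \ref{domination poly of zn}(ii),
\[ D(\Gamma(\mathbb{Z}_{n}),x)=(1+x)^{N}-(1+x)^{a}+x^{a}=\big((1+x)^{b}-1\big)(1+x)^{a}+x^{a}. \]
We first study $f(x)=\big((1+x)^{b}-1\big)(1+x)^{a}$, with coefficients $f_k=\binom{N}{k}-\binom{a}{k}=\sum_{j\ge 1}\binom{b}{j}\binom{a}{k-j}$. The coefficient sequence of $D$ is then $e_k=f_k+[k=a]$.

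\textbf{Step 1: $f$ is log-concave.} Write $(1+x)^{b}-1=x\,g(x)$ with $g_j=\binom{b}{j+1}$. The sequence $(g_j)$ is a tail of a binomial row, so it is positive and log-concave with no internal zeros. Likewise $(1+x)^{a}$ has positive log-concave coefficients. The product of two such polynomials is log-concave (the fact quoted from \cite{levitsurvey}), and the shift by $x$ preserves this. Hence $f_k^2\ge f_{k-1}f_{k+1}$ for $1\le k\le N-1$ (the case $k=1$ is trivial since $f_0=0$), and $f_k>0$ for $1\le k\le N$.

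\textbf{Step 2: adding the extra term $x^{a}$.} Only the inequalities at $k=a-1,a,a+1$ change. At $k=a$ the left side grows from $f_a^2$ to $(f_a+1)^2$, so that inequality only improves. At $k=a\pm1$ we need
\[ f_{a-1}^2\ge f_{a-2}(f_a+1) \quad\text{and}\quad f_{a+1}^2\ge (f_a+1)f_{a+2}, \]
that is, a quantitative slack $f_{k}^2-f_{k-1}f_{k+1}\ge f_{k-1}$ resp. $\ge f_{k+1}$ at $k=a\mp1$. This is the step I expect to be the main obstacle.

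To handle it, I would use the ratios $r_k=f_k/f_{k-1}$. The slack equals $f_{k-1}f_k(r_k-r_{k+1})$, so it suffices to show $r_k-r_{k+1}\ge 1/f_k$ near $k=a$. The term $\binom{a}{k}$ in $f_k=\binom{N}{k}-\binom{a}{k}$ is negligible against $\binom{N}{k}$ since $a=N/p$. Therefore $r_k$ is a small perturbation of $(N-k+1)/k$, and $r_k-r_{k+1}\approx (N+1)/(k(k+1))$, which is of order $1/a$. Meanwhile $f_k$ near $k=a$ is roughly $\binom{N}{a}$, which is astronomically larger than $a$, so the required slack holds with enormous room.

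I would make this rigorous with crude bounds: $\binom{a}{k}\le \binom{N}{k}p^{-k}$ controls the perturbation of $r_k$ uniformly. The few smallest cases (e.g.\ $p=2$, $m=2,3$) would be checked by direct computation of the coefficients.

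\textbf{Step 3: unimodality.} Since all coefficients $e_1,\dots,e_N$ are positive and log-concave with no internal zeros, unimodality follows from the standard implication recalled before the statement. The leading $e_0=0$ only precedes the increasing part, so it does not break unimodality. The same template applies to the $n=pq$ formula \eqref{poly zpq}. There one writes the first two terms as $x\,g(x)(1+x)^{p+q-1}$ with $g$ as in Step 1 and $b=pq-p-q+1$. The correction $x\big((1+x)^{p-1}-1\big)\big((1+x)^{q-1}-1\big)+x^{p}+x^{q}$ is supported on degrees at most $p+q-1$ and is tiny compared with $\binom{pq}{k}$ there. So the same slack argument as in Step 2 absorbs it.
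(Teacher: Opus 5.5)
Your argument is correct and takes a different, more rigorous route than the paper. The paper's justification consists only of the remark that its displayed expansion keeps ``the binomial property'' of its coefficients.

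That expansion is also built from the factored form $(1+x)^{p^{m-1}}\sum_{i=1}^{p}\binom{p}{i}x^{i}+x^{p^{m-1}}$ in the statement of the prime-power theorem. This form agrees with $(1+x)^{p^{m}}-(1+x)^{p^{m-1}}+x^{p^{m-1}}$ only when $p^{m}-p^{m-1}=p$, i.e.\ $n=4$. That is why the paper's example for $n=2^{5}$ has degree $18$ instead of $32$.

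You instead start from the unfactored line of that proof, which is the actual domination polynomial of $K_{\phi(n)}\vee\overline{K}_{p^{m-1}}$. Your Step~1 writes $f=x\,g(x)(1+x)^{a}$ and uses closure of log-concave sequences without internal zeros under convolution. This genuinely proves log-concavity of the bulk. You also correctly isolate the only indices where the extra $x^{a}$ matters, which the paper never examines.

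Step~2 is only sketched. Your criterion $r_k-r_{k+1}\ge 1/f_k$ is the right one at $k=a-1$ but not at $k=a+1$, where the slack must be at least $f_{a+2}$. The step closes without asymptotics or computer checks, as follows.

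Since $\binom{a}{a}=1$, you have $e_k=\binom{N}{k}$ for every $k\ge a$, and $e_k=\binom{N}{k}-\binom{a}{k}$ for $1\le k<a$.

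\textbf{Indices other than $k=a-1$.} For $k\ge a+1$ the inequalities are plain binomial log-concavity. At $k=a$,
\[
e_a^2=\binom{N}{a}^2\ge\binom{N}{a-1}\binom{N}{a+1}\ge e_{a-1}e_{a+1}.
\]
For $k\le a-2$, your Step~1 already applies.

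\textbf{The index $k=a-1$.} If $a=2$ this is trivial, since $e_0=0$. For $a\ge3$, drop the favourable terms $a^{2}$ and $\binom{a}{2}\binom{N}{a}$. It then suffices that $\binom{N}{a-1}^2-\binom{N}{a-2}\binom{N}{a}\ge 2a\binom{N}{a-1}$. The exact identity
\[
\binom{N}{a-1}^2-\binom{N}{a-2}\binom{N}{a}=\binom{N}{a-1}^2\,\frac{N+1}{a(N-a+2)}\ge\frac{1}{a}\binom{N}{a-1}^2
\]
reduces this to $\binom{N}{a-1}\ge 2a^{2}$.
\begin{itemize}
\item For $a=3$ (so $N=9$) this reads $36\ge18$.
\item For $a\ge4$, use $N\ge 2a$ to get $\binom{N}{a-1}\ge\binom{2a}{a-1}\ge\binom{2a}{3}=\frac{2a(2a-1)(a-1)}{3}\ge 2a^{2}$.
\end{itemize}
Your Step~3 then finishes the proof.

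Your closing remark about $n=pq$ lies outside this statement. It is also not the same argument: there the correction perturbs a whole range of degrees up to $p+q-1$, not a single index.
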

For example, with $p=3$ and $q=5,$ the domination polynomial of $D(\Gamma(\mathbb{Z}_{3\cdot 5}),x)$ is
\begin{equation}\label{example pq}
	\begin{aligned}
	8 x &+ 84 x^2 + 429 x^3 + 1346 x^4 + 2997 x^5 + 5004 x^6 + 6435 x^7 + 
	6435 x^8 + 5005 x^9 \\
	&+ 3003 x^{10} + 1365 x^{11} + 455 x^{12} + 105 x^{13} + 
	15 x^{14} + x^{15},
\end{aligned}
\end{equation}
which is clearly with exactly one oscillation and its coefficients satisfies log-concave condition.
With the frantic calculations as above, it can be shown that the domination polynomial of $D(\Gamma(\mathbb{Z}_{n}),x)$ with $n=p^{n_{1}}q^{n_{2}}$ is log-concave and unimodal.

Next, we discuss the zeros of $ D(\Gamma(\mathbb{Z}_{n}),x)$ for $n=p^{m}$ and $n=pq.$ Before, proceeding for it: we state \textit{Eneström-Kakeya} theorem (see, \cite{barbeau}), which helps in locating the zeros of a polynomial. Given a real polynomial $e(x)=\sum_{i=0}^{n} e_{i}x^{i}$ with $e_{i}\geq 0$,  the zeros of $e(x)$ lie in the annulus region: $r\leq |Z|\leq R,$
where 
$$r=\min \left \{\left|\frac{e_{i}}{e_{i+1}}\right| : 0\leq i\leq n-1  \right \} ~\text{and}~R=\max \left  \{\left|\frac{e_{i}}{e_{i+1}}\right| : 0\leq i\leq n-1  \right \}.$$
However, there are a number of additional results on zeros of polynomial and their bounds in the literature \cite{rahman}. Upon further examination, we observed that the Eneström-Kakeya theorem gives improved region in complex plane.

From \eqref{poly zp^m}, it is clear that $0<r \leq 1 $ and by the unimodal property, the maximum of $R$ is attained by coefficient of $x^{\alpha+p-2}$ and $x^{\alpha+p-1}$, and its value is given by
\[ R=\frac{\binom{p}{p-2}+p\alpha+\binom{\alpha}{\alpha-2}}{\alpha+p}=\frac{p(p-1)+2\alpha p+\alpha(\alpha-1)}{2(\alpha+p)}. \]
Also, with the even degree of polynomial \eqref{poly zp^m}, it has at least two real zero, one is $x=0$. For, the other complex zeros, the Eneström-Kakeya theorem assures they lie in 
\[ 0<|Z|<\frac{p(p-1)+2\alpha p+\alpha(\alpha-1)}{2(\alpha+p)}. \]
For the polynomial  given in \eqref{example p^m}, its zeros are
\begin{align*}
	0,&-0.495409, -4.46004\pm2.50073 i,-1.16758\pm2.27659 i,-0.606709\pm1.2419 i,\\
	&-0.523464\pm0.773364 i,-0.50421\pm0.514706 i,-0.498372\pm0.342698 i,\\
	&-0.496332\pm0.211938 i,-0.495595\pm0.10164 i.
\end{align*}
With $p=2$, $=5$ and $\alpha=16$, $\frac{p(p-1)+2\alpha p+\alpha(\alpha-1)}{2(\alpha+p)}=8.5$, the graphical representation of the zeros lying in $0<|Z|<8.5$ is shown in Figure \ref{Fig zeros} (left). For the real zeros, with application of Cauchy theorem for real zeros \cite{hardy}, the real zero $\eta$ of  $D(\Gamma(\mathbb{Z}_{p^{m}}),x)$ satisfies $\eta\leq 1+p^{m-1}+p. $
\begin{figure}[H]
	\centerline{\scalebox{.25}{\includegraphics{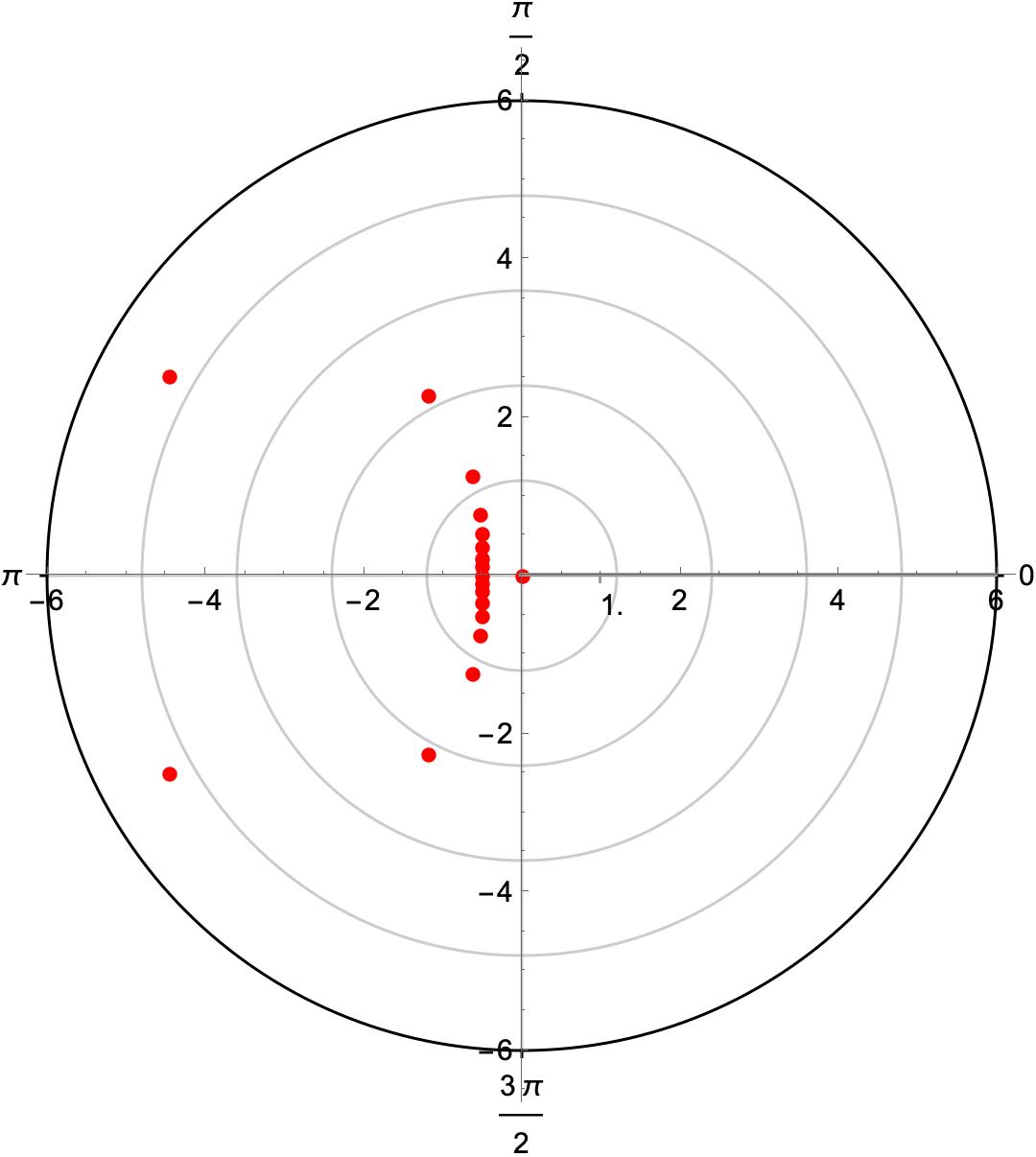}}\qquad\scalebox{.25}{\includegraphics{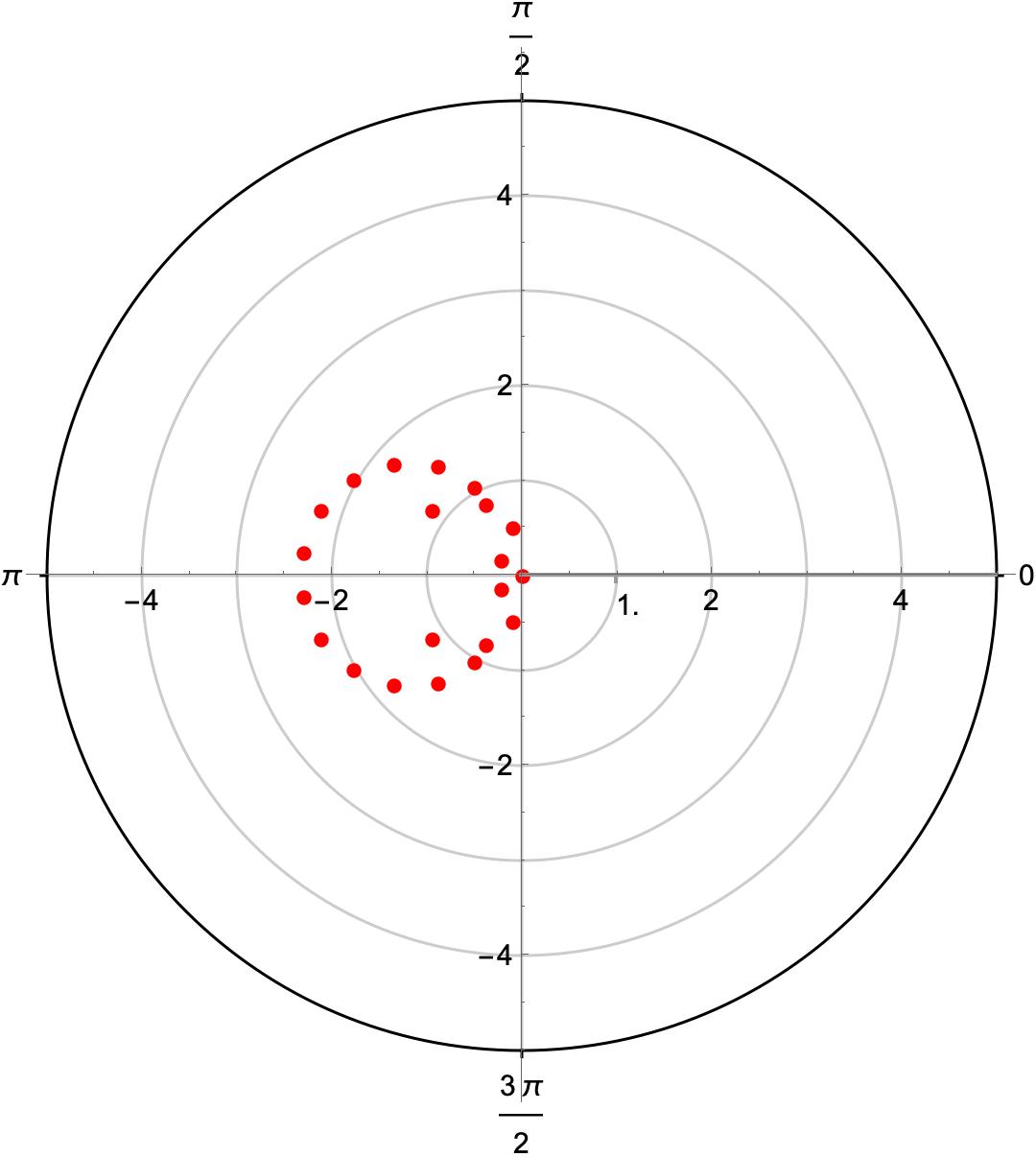}}}
	\caption{Representation of zeros of $D(\Gamma(\mathbb{Z}_{2^{5}}),x)$ and $D(\Gamma(\mathbb{Z}_{3\cdot 7}),x)$ on plane.}
	\label{Fig zeros}
\end{figure}
For non-zero complex roots of \eqref{poly zpq} with Eneström-Kakeya theorem, the zeros lie in $0<|Z|<\frac{pq-1}{2}$. The zeros of \eqref{example pq} are
\begin{align*}
	0,&-2.30127\pm0.242051 i,-2.11029\pm0.681178 i,-1.77132\pm1.00249 i,-1.34121\pm1.16435 i,\\
	&-0.941712\pm0.680334 i,-0.877278\pm1.1393 i,-0.491186\pm0.911887 i,\\
	&-0.365031\pm0.729321 i,-0.218239\pm0.15275 i,-0.0824722\pm0.489912 i.
\end{align*}
The graphical representation of above zeros are show in Figure \ref{Fig zeros} (right). The zeros lie in the region $1<|Z|<8.5.$

\section*{Conclusion}
The current developments provide the domination polynomial for some values of $n$ in $\Gamma(\mathbb{Z}_n)$, while other parallel results are presented for general values of $n$. The unimodal, log-concave features, and zeros of $D(\Gamma(\mathbb{Z}_n),x)$  are discussed for certain values of $n$. Some problems that can be considered in the future are: the domination polynomial of $\Gamma(\mathbb{Z}_n)$ for remaining values of $n$, unimodality, log-concave and the distribution of their zeros.
\section*{Declarations}
\noindent \textbf{Data Availability:}	There is no data associated with this article.

\noindent \textbf{Funding:} The authors did not receive support from any organization for the submitted work.

\noindent \textbf{Conflict of interest:} The authors have no competing interests to declare that are relevant to the content of this article.

\section*{Note:}
I welcome any comments and suggestions regarding this article; please feel free to contact me at \href{mailto:bilalahmadrr@gmail.com}{bilalahmadrr@gmail.com}.

\end{document}